\theoremstyle{plain}
\newtheorem{thm}{Theorem}[section]
\newtheorem{lem}{Lemma}[section]
\newtheorem{cnd}{Condition}[section]
\theoremstyle{definition} % I don't want examples and remarks in italic.
\newcommand{\infint}{\int_{-\infty}^{\infty}}
\newcommand{\Log}{\operatorname{Log}}
\def\ex{{\rm E\,}}
\def\supp{\sup_{t\in[-h^{-1},h^{-1}]}}
\def\suppp{\sup_{\mathcal{T}}}
\begin{document}
\date{\today}
\title{Nonparametric inference for discretely sampled L\'evy processes}
\author{Shota Gugushvili\\
{\normalsize Department of Mathematics}\\
{\normalsize Vrije Universiteit Amsterdam}\\
{\normalsize De Boelelaan 1081a}\\
{\normalsize 1081 HV Amsterdam}\\
{\normalsize The Netherlands}\\
{\normalsize s.gugushvili@vu.nl}}
\maketitle
\begin{abstract}
Given a sample from a discretely observed L\'evy process
$X=(X_t)_{t\geq 0}$ of the finite jump activity, the
problem of nonparametric estimation of the L\'evy density $\rho$
corresponding to the process $X$ is studied. An estimator of $\rho$ is proposed that is based
on a suitable inversion of the L\'evy-Khintchine formula and a plug-in device. The main
results of the paper deal with upper risk bounds for estimation of $\rho$ over suitable classes of L\'evy triplets. The corresponding lower bounds are also discussed.
\medskip\\
{\sl Keywords:} Empirical characteristic function; empirical process; Fourier inversion; L\'evy density; L\'evy process; maximal inequality; mean square error.\\
{\sl AMS subject classification:} 62G07, 62G20\\
\end{abstract}
\newpage

\section{Introduction}

Recent years have witnessed a great revival of interest in L\'evy
processes, which is primarily due to the fact that they have found numerous
applications in various fields. The main
interest has been in mathematical finance, see e.g.\ \cite{tankov}
for a detailed treatment and many references, however L\'evy processes obtained due
attention also in queueing, telecommunications, extreme value
theory, quantum theory and many others. A thorough exposition of the
fundamental properties of L\'evy processes can be found e.g.\ in \cite{bertoin}, \cite{kyprianou} and
\cite{sato}.

It is well-known that L\'evy processes have a close link with
infinitely divisible distributions: if $X=(X_t)_{t\geq 0}$ is a
L\'evy process, then its marginal distributions are all infinitely
divisible and are determined by the distribution of $X_{\Delta},$
where $\Delta>0$ is an arbitrary fixed number. Conversely, given
an infinitely divisible distribution $\mu,$ one can construct a
L\'evy process $X=(X_t)_{t\geq 0},$ such that $P_{X_{\Delta}}=\mu,$ cf.\ Theorem 7.10
in \cite{sato}. Hence the law of the process $X$ can be uniquely
characterised by the characteristic function of $X_{\Delta},$
where $\Delta>0$ is some fixed number. By the L\'evy-Khintchine
formula for infinitely divisible distributions, the characteristic
function $\phi_{X_{\Delta}}$ of $X_{\Delta}$ can be written as
\begin{equation*}
%\label{levykhintchineformula}
\phi_{X_{\Delta}}(t)=e^{\psi_{\Delta}(t)},
\end{equation*}
where the exponent $\psi_{\Delta},$ called the characteristic or L\'evy exponent, is given by
\begin{equation}
\label{levyexponent} \psi_{\Delta}(t)=\Delta i\gamma_0
t-\Delta\frac{1}{2}\sigma^2t^2+\Delta\int_{\mathbb{R}\setminus\{0\}}(e^{itx}-1-itx
1_{[|x|\leq 1]})\nu(dx),
\end{equation}
see Theorem 8.1 of \cite{sato}. Here
$\gamma_0\in\mathbb{R},$ $\sigma\geq 0,$ and $\nu$ is a measure
concentrated on $\mathbb{R}\setminus\{0\},$ such that
$\int_{\mathbb{R}\setminus\{0\}}(1\wedge x^2)\nu(dx)<\infty.$ This
measure is called the L\'evy measure, while the triple
$(\gamma_0,\sigma^2,\nu)$ is referred to as the characteristic or
L\'evy triplet of $X.$ The parameter $\gamma_0$ is called a drift parameter
and a constant $\sigma^2$ is a diffusion parameter. The representation in \eqref{levyexponent} in terms
of the L\'evy triplet is unique. It then follows that the L\'evy
triplet determines uniquely the law of any L\'evy process.
Therefore, many statistical inference problems for L\'evy processes can be
reduced to inference on the corresponding characteristic
triplets.

Until quite recently most of the existing literature dealt with
parametric inference procedures for L\'evy processes, see e.g.\
\cite{akritas1}--\cite{basawa2}, \cite{bibby}--\cite{haerdle}, \cite{carr}, \cite{jongbloed},
\cite{nolan}, \cite{rydberg} and \cite{zolotarev}. However, a
nonparametric approach is also possible and arises if one does not
impose parametric assumptions on the L\'evy measure, or its
density, in case the latter exists. A nonparametric approach can
give e.g.\ valuable indications about the shape of the L\'evy density.
Furthermore, parametric inference for L\'evy processes is
complicated by the fact that for many L\'evy processes their
marginal densities are often intractable or not available in
closed form. This makes the implementation of such a standard parameter estimation method as the maximum likelihood method difficult. We refer e.g.\ to \cite{ait-sahalia}, \cite{buchmann}--\cite{bugr}, \cite{chen}, \cite{comtecatalot}--\cite{genon}, \cite{cont}, \cite{gug}, \cite{gug3}, \cite{meulen}--\cite{kappus},  \cite{reiss}, \cite{watteel}, as well as the proceedings \cite{proceedings} and references therein for a nonparametric approach to
inference for L\'evy processes.

In the present work we will assume that the L\'evy measure $\nu$
has a finite total mass, i.e.\ $\nu(\mathbb{R})<\infty,$ and that it has
a density $\rho.$ In essence this means that the L\'evy process
that we sample from is a sum of a linear drift, a
rescaled Brownian motion and a compound Poisson process. Thus this
model is related to Merton's model of an asset price, see
\cite{merton}. Nonparametric inference for a similar model was considered in \cite{belomestny}, \cite{chen} and \cite{gug3}.

Since in our case $\nu(\mathbb{R})<\infty,$ the L\'evy-Khintchine
exponent can be rewritten as
\begin{equation}
\label{levyexponent2} \psi_{\Delta}(t)=\Delta i\gamma
t-\Delta\frac{1}{2}\sigma^2t^2+\Delta\int_{-\infty}^{\infty}(e^{itx}-1)\rho(x)dx.
\end{equation}
The triple $(\gamma,\sigma^2,\rho)$ is again referred to as a L\'evy triplet. Note that $\gamma$ in \eqref{levyexponent2} differs from $\gamma_0$ in \eqref{levyexponent}.

Suppose that the L\'evy process $X=(X_t)_{t\geq 0}$ is observed at
discrete time instances $\Delta,2\Delta,\ldots,n\Delta,$ with
$\Delta$ kept fixed.  This sampling case is usually referred to as the low frequency data case. For the case when $\Delta$ is allowed to depend on $n$ and $\Delta\rightarrow 0,n\Delta\rightarrow\infty$ as $n\rightarrow\infty$ see e.g.\ \cite{catalot}, \cite{genon} or \cite{figueroalop}. In this case it is customary to talk about high frequency data case. Returning to the case with a fixed $\Delta,$ by a rescaling argument, without loss of
generality, we can take $\Delta=1.$ Based on observations $X_1,\ldots,X_n,$ our goal in this paper is to estimate nonparametrically the L\'evy
density $\rho.$ Notice that this is an inverse problem in that $\rho$ is associated with jump sizes  of a L\'evy process and their intensity, the jumps themselves are not directly observable under the present sampling scheme, and consequently $\rho$ has to be estimated from indirect observations $X_1,\ldots,X_n.$

We will base our estimator of $\rho$ on a suitable
inversion of $\phi_{X_1}.$ The idea of expressing the L\'evy
measure or the L\'evy density in terms of $\phi_{X_1}$ and then
replacing $\phi_{X_1}$ by its natural nonparametric estimator, the
empirical characteristic function, to obtain a plug-in type
estimator for the L\'evy measure or the L\'evy density has been
successfully applied e.g.\ in \cite{chen}, \cite{comte},
\cite{gug}, \cite{gug3}, \cite{reiss} and \cite{watteel}. The
logic behind this approach is that except of some particular
cases, e.g.\ that of the compound Poisson process, see \cite{bu}
and \cite{bugr}, finding an explicit relationship expressing the L\'evy
measure or its density directly in terms of the distribution of
$X_1$ without referring to the Fourier transforms is difficult. This hampers the use of a plug-in device,
which is one of the most popular and useful methods for obtaining
estimators in statistics. On the other hand the Fourier approach
allows one to cover a large class of examples, as shown in the
above-mentioned papers.

Observe that the model we consider in the present work
shares many features characteristic of a convolution model
with partially or totally unknown error distribution, see \cite{matias}, \cite{lacour}, \cite{meister} and  \cite{neumann}. For instance, the Gaussian components in $X_1,\ldots,X_n$ in our case will play a role similar to the measurement error in those papers, in case the latter has a normal distribution.

We proceed to the construction of an estimator of $\rho.$ First by differentiating the L\'evy-Khintchine formula we will
derive a suitable inversion formula for $\rho.$ Suppose that
$\int_{\mathbb{R}}x^2\rho(x)dx<\infty.$ Since $\rho$ has a finite
second moment, so does $X_{1}$ by Corollary 25.8 in \cite{sato}.
Also $\ex[|X_1|]$ is finite by the Cauchy-Schwarz inequality. Hence we can differentiate
$\phi_{X_1}$ with respect to $t$ to obtain
\begin{equation}
\label{psiderivative}
\phi_{X_1}^{\prime}(t)=\phi_{X_1}(t)\left(i\gamma-\sigma^2t+i\int_{-\infty}^{\infty}e^{itx}x\rho(x)dx\right).
\end{equation}
Notice that differentiation of $\int_{\mathbb{R}}(e^{itx}-1)\rho(x)dx$ under the integral sign is justified by the dominated convergence theorem, applicable because of our assumptions on $\rho.$ Next rewrite \eqref{psiderivative} as
\begin{equation}
\label{ratio}
\frac{\phi_{X_1}^{\prime}(t)}{\phi_{X_1}(t)}=i\gamma-\sigma^2t+i\int_{\mathbb{R}}e^{itx}x\rho(x)dx,
\end{equation}
which is possible, because $\phi_{X_1}(t)\neq 0$ for all $t\in\mathbb{R},$ see e.g.\ Theorem $7.6.1$ in \cite{chung}. Differentiating both sides of this identity with respect to $t,$ we get
\begin{equation}
\label{psiderivative2}
\frac{\phi_{X_1}^{\prime\prime}(t)\phi_{X_1}(t)-(\phi_{X_1}^{\prime}(t))^2}{(\phi_{X_1}(t))^2}=-\sigma^2-\int_{\infty}^{\infty}e^{itx}x^2\rho(x)dx,
\end{equation}
where again we interchanged the differentiation and integration order in the righthand side of \eqref{ratio} to obtain the righthand side of \eqref{psiderivative2}. Thus by rearranging the terms we have
\begin{equation}
\label{ast*}
\int_{-\infty}^{\infty} e^{itx}x^2\rho(x)dx=\frac{(\phi_{X_1}^{\prime}(t))^2-\phi_{X_1}^{\prime\prime}(t)\phi_{X_1}(t)}{(\phi_{X_1}(t))^2}-\sigma^2.
\end{equation}
Suppose that the righthand side is integrable, which is implied by the assumption that $\phi_{\rho}^{\prime\prime}$ is integrable. Here $\phi_{\rho}$ denotes the Fourier transform of $\rho.$ Then by the Fourier inversion argument the relationship
\begin{equation*}
x^2\rho(x)=\frac{1}{2\pi}\int_{-\infty}^{\infty} e^{-itx}\left(\frac{(\phi_{X_1}^{\prime}(t))^2-\phi_{X_1}^{\prime\prime}(t)\phi_{X_1}(t)}{(\phi_{X_1}(t))^2}-\sigma^2\right)dt
\end{equation*}
holds. If $x\neq 0,$ this yields
\begin{equation}
\label{inv}
\rho(x)=\frac{1}{2\pi x^2}\infint
e^{-itx}\left(\frac{(\phi_{X_1}^{\prime}(t))^2-\phi_{X_1}^{\prime\prime}(t)\phi_{X_1}(t)}{(\phi_{X_1}(t))^2}-\sigma^2\right)dt,
\end{equation}
and we obtain a desired inversion formula. This formula coincides with the one given in \cite{burnaev}\footnote{\cite{burnaev} contains a more general result valid also for L\'evy densities with infinite total mass. However, the statement of the theorem in
\cite{burnaev} mistakenly claims that the L\'evy density $\rho$ is
bounded under the assumptions given in \cite{burnaev}. In reality this can in general be ascertained only for $x^2\rho(x).$ Examples (e) and (f) considered in \cite{burnaev} illustrate our point.}. The formula has to be
compared to related inversion formulae given in
\cite{comte}, \cite{genon}, \cite{reiss} and \cite{watteel}. Notice that under stronger moment conditions on $X_1$ one can perform the differentiation step in the above derivation not twice, but three times, thereby eliminating $\sigma^2$ from \eqref{psiderivative2}, and one can obtain an inversion formula of the same type as in \eqref{inv}, but not involving $\sigma^2$ explicitly, see e.g.\ \cite{genon}. We do not pursue this path, as a study of asymptotic properties of an estimator of $\rho$ of the same type as we propose below based on this different inversion formula would require stronger moment conditions on $X_1,$ cf.\ the discussion in the next section. It would also involve longer and more technical proofs of the asymptotic results. Finally, under certain smoothness assumptions on the L\'evy density it would lead to an estimator with worse convergence rate than the one that we propose below. See Section \ref{results} for an additional discussion.

Denote $Z_j=X_{j}-X_{j-1}$ and observe that $Z_1,\ldots,Z_n$ are i.i.d., which
follows from the stationary independent increments property of a
L\'evy process. Let
$\hat{\phi}(t)=n^{-1}\sum_{j=1}^n e^{i tZ_j}.$
By the strong law of large numbers, for every fixed
$t,$ the empirical characteristic function
$\hat{\phi}(t)$ and its derivatives with respect to $t,$
$\hat{\phi}^{\prime}(t)$ and
$\hat{\phi}^{\prime\prime}(t),$ converge a.s.\ to
$\phi_{X_1}(t),\phi_{X_1}^{\prime}(t)$ and
$\phi_{X_1}^{\prime\prime}(t),$ respectively. Using a
plug-in device, a possible estimator of $\rho(x)$ could then be
\begin{equation}
\label{naivefn} \frac{1}{2\pi
x^2}\infint e^{-itx}\left(\frac{(\hat{\phi}^{\prime}(t))^2}{(\hat{\phi}(t))^2}-\frac{\hat{\phi}^{\prime\prime}(t)}{\hat{\phi}(t)}-\hat{\sigma}^2\right)dt,
\end{equation}
where $\hat{\sigma}^2$ is some estimator of ${\sigma}^2.$ The problem with this `estimator' of $\rho$ is that in general the integrand
in \eqref{naivefn} is not integrable. Furthermore, small values of
$\hat{\phi}(t)$ might render the estimator
numerically unstable, since $\hat{\phi}(t)$ appears in
the denominator in \eqref{naivefn}. Therefore, as an estimator of
$\rho$ we propose the following modification of \eqref{naivefn},
\begin{equation}
\label{fn} \hat{\rho}(x)=\frac{1}{2\pi
x^2}\infint e^{-itx}\left(\frac{(\hat{\phi}^{\prime}(t))^2}{(\hat{\phi}(t))^2}1_{G_t}-\frac{\hat{\phi}^{\prime\prime}(t)}{\hat{\phi}(t)}1_{G_t}-\hat{\sigma}^2\right)\phi_w(ht)dt.
\end{equation}
Here $\phi_w$ denotes the Fourier transform of a kernel function
$w,$ while a number $h>0$ denotes a bandwidth. This terminology is borrowed from the kernel estimation theory, see e.g.\ \cite{tsyb}. The integral in
\eqref{fn} is finite under the assumption that $\phi_w$ has
a compact support, for instance on $[-1,1].$ We define the set $G_t$ in \eqref{fn} by
\begin{equation}
\label{Gt}
G_t=\left\{|\hat{\phi}(t)|\geq \kappa_n
e^{-\Sigma^2/(2h^2)}\right\}.
\end{equation}
Hence $G_t$ depends on $h,$ as well as a constant $\Sigma$ and a
sequence $\kappa_n\rightarrow 0$ of real numbers to be specified in the next section, where we also give some additional heuristics for the definition of $G_t.$ A general reason for using truncation with $1_{G_t}$ is a desire of numerical stability, but truncation in \eqref{fn} will also help in proving the asymptotic results from Section \ref{results}. At this point notice that we could have also used a ``diagonal-out" estimator
\begin{equation*}
\frac{2}{n(n-1)}\sum_{1\leq j<k\leq n}e^{itZ_j}e^{itZ_k}
\end{equation*}
to estimate $(\phi_{X_1}(t))^2$ in the denominator of \eqref{inv} and a similar ``diagonal-out" estimator to estimate $(\phi_{X_1}^{\prime}(t))^2.$ An advantage of these two estimators is that they are unbiased estimators of $(\phi_{X_1}(t))^2$ and $(\phi_{X_1}^{\prime}(t))^2,$ respectively, while $(\hat{\phi}(t))^2$ and $(\hat{\phi}^{\prime}(t))^2$ are not. On the theoretical side study of the resulting modification of $\hat{\rho}$ would require the use of the theory of U-statistics, see e.g.\ Chapter $12$ in \cite{vdvaart}. However, since in the present paper we are mainly concerned with rates of convergence for estimation of $\rho,$ we refrain from studying this possible modification of $\hat{\rho}.$

It
remains to propose an estimator of $\sigma^2.$ To this end we use
an estimator from \cite{gug3} defined via
\begin{equation}
\label{sig2}
\hat{\sigma}^2=\int_{\mathbb{R}}\max\{\min\{M_n,\log
(|\hat{\phi}(t)|)\},-M_n\}v_h(t)dt.
\end{equation}
Here $v_h$ is a kernel function depending on $h,$ while $M_n$ denotes a sequence of
positive numbers diverging to infinity at a suitable rate. Appropriate conditions on all three will be given in the next section. The estimator $\hat{\sigma}^2$ is again based on the L\'evy-Khintchine formula and we refer to \cite{gug3} for the heuristics of its introduction. There does not seem to exist an `easy' way to define an estimator of $\sigma^2.$ `Nonparametric' estimators of finite-dimensional parameters in semiparametric deconvolution problems (these are related to the problem we are considering in the present paper) have already been proposed in the literature, see e.g.\ \cite{matias} and \cite{atomic}. In the context of L\'evy processes `nonparametric' estimators of finite-dimensional parameters have been used e.g.\ in \cite{belomestny} and \cite{gug3}. These estimators can often be proven to be rate-optimal.

If $\phi_w$ is symmetric and real-valued, then by taking a complex conjugate one can see that $\hat{\rho}$ is
real-valued, because this amounts to changing the integration
variable from $t$ into $-t$ in \eqref{fn}. On the other hand,
positivity of $\hat{\rho}$ is not guaranteed, which is a slight drawback often shared by estimators based on Fourier inversion and kernel smoothing. However, one can always
consider $\hat{\rho}^+(x)=\max(\hat{\rho}(x),0)$ instead of $\hat{\rho}(x).$ For this
modified estimator we have $\ex[(\hat{\rho}^+(x)-\rho(x))^2]\leq
\ex[(\hat{\rho}(x)-\rho(x))^2]$ and hence its performance is at
least as good as that of $\hat{\rho},$ if the mean square error is
used as the performance criterion. We restrict our attention to studying the estimator $\hat{\rho}$ only.

The structure of the paper is as follows: in the next section we
will study the asymptotic behaviour of the mean square error of
the proposed estimator of $\rho.$ In particular we will derive convergence rates of our estimator over appropriate classes of L\'evy triplets and discuss the corresponding lower bounds for estimation of $\rho.$ The section is concluded with a discussion on the obtained results and possible extensions. The proofs of results
from Section \ref{results} are collected in Section \ref{proofs}.

\section{Results}
\label{results}

We first formulate conditions that will be used to establish
asymptotic properties of the estimator $\hat{\rho}.$ We also
supply some comments on these conditions. Introduce a jump size
density $f(x):=\rho(x)/\nu(\mathbb{R}).$

\begin{cnd}
\label{conditionf} Let the unknown L\'evy density $\rho$ belong to the class
\begin{align*}
W(\beta,L,L^{\prime},L^{\prime\prime},K,\Lambda)=\Bigl\{ &\rho:\rho(x)=\nu(\mathbb{R})
f(x),f\text{ is a probability density},\\
&\infint |t|^{\beta}|\phi_f(t)|dt\leq L,\\
&|\phi_f(t)|\leq \frac{L^{\prime}}{|t|^{\beta}},\\
&|\phi_f^{\prime}(t)|\leq \frac{L^{\prime\prime}}{|t|^{\beta}},\\
&\infint x^{12}f(x)dx\leq K,\\
&\phi_f^{\prime\prime}\text{ is integrable},\\
&\nu(\mathbb{R})\in(0,\Lambda]\Bigr\},
\end{align*}
where $\beta,L,L^{\prime},L^{\prime\prime},K$ and $\Lambda$ are strictly positive numbers.
\end{cnd}
This condition is similar to the one given in \cite{gug3} and we
refer to the latter for additional discussion. When $\beta$ is an
integer, the integrability condition on $\phi_f$ in Condition \ref{conditionf} is roughly
equivalent to $f$ having a derivative of order $\beta.$ The moment condition on $f,$ and consequently on $\rho,$ is admittedly strong, but on the other hand in mathematical finance it is customary to assume that $\rho$ has a finite exponential moment. The moment condition in Condition \ref{conditionf} is used to prove an appropriate maximal inequality for $\hat{\phi}$ and its derivatives, see Theorem \ref{thm-ineq}, which constitutes one of the important working tools of the paper.

\begin{cnd}
\label{conditionsigma} Let $\sigma$ be such that
$\sigma\in[0,\Sigma],$ where $\Sigma$ is a strictly
positive number.
\end{cnd}
For the case when $\Sigma=0,$ that is to say when $\sigma=0$ is known beforehand, we refer to \cite{comte} and \cite{gug}.  Observe
that in general $\sigma$ determines how fast the characteristic
function $\phi_{X_1}$ decays at plus and minus infinity, because as it is easy to see, one has
\begin{equation}
\label{twost}
|\phi_{X_1}(t)|\geq e^{-2\Lambda-\Sigma^2 t^2/2}.
\end{equation}
The knowledge of
$\Sigma,$ which we will assume, gives us a lower bound on the rate of decay of
$\phi_{X_1}$ at plus and minus infinity (uniformly in $\sigma\in[0,\Sigma]$).

\begin{cnd}
\label{conditiongamma} Let $\gamma$ be such that
$|\gamma|\leq \Gamma,$ where $\Gamma$ is a positive number.
\end{cnd}

This condition is the same as the one in \cite{gug3}, cf.\ also \cite{belomestny}.

\begin{cnd}
\label{conditionh} Let the bandwidth $h=h_n$ depend on $n$ and be
such that $h_n=(\eta\log n)^{-1/2}$ with $0<\eta<1/(2\Sigma^{2}).$
\end{cnd}

This condition is similar to the one given in \cite{gug3}. Notice that in order to keep our notation compact, we will suppress the dependence of $h_n$ on $n$ in the notation. The fact that the bandwidth $h$ depends on $\Sigma$
has a parallel in the condition on the smoothing parameter in
\cite{comte}, see Remark $4.2$ there, and also arises in
deconvolution problems with unknown error distribution, see
\cite{matias}. As usual in kernel estimation, see e.g.\ p.\ 7 in \cite{tsyb}, a choice of $h$ establishes a trade-off between the bias and the variance of the estimator: too small an $h$ will result in an estimator with small bias but large variance, while too large an $h$ results in the estimator with large bias but small variance. From Theorems \ref{thm-rho} and \ref{lowbound-thm} it will follow that the choice of $\rho$ as in Condition \ref{conditionh} is optimal in one particular situation in a sense that it asymptotically minimises the order of the mean square error of the estimator $\hat{\rho}$ at a fixed point $x.$

\begin{cnd}
\label{conditionk} Let the kernel $w$ be the sinc kernel:
$w(x)=\sin x/(\pi x).$
\end{cnd}

The sinc kernel has also been used in \cite{gug3} when estimating the L\'evy density. Its use is frequent in deconvolution problems, see e.g.\ \cite{matias}. The Fourier transform of the sinc kernel is given by
$\phi_w(t)=1_{[-1,1]}(t).$

\begin{cnd}
\label{conditionkappa} Let the sequence $\kappa_n$ be such that
$\kappa_n=\kappa |\log h|^{-1}$ for a constant
$\kappa>0.$
\end{cnd}

This is a technical condition used in the proofs. Other sufficiently slowly vanishing sequences $\{\kappa_n\}$ can also be used, ours is just one concrete example. The intuition
behind Condition \ref{conditionkappa} is that up to a constant $e^{-2\Lambda},$ the term $e^{-\Sigma^2/(2h^2)}$ gives a
lower bound for the absolute value of the characteristic
function $\phi_{X_1}(t)$ on the interval $[-h^{-1},h^{-1}],$ cf.\ \eqref{twost}. For
$n$ large enough, with an indicator $1_{G_t}$ in the definition of
$\hat{\rho}$ we thus cut-off those frequencies $t$ for which
$|\hat{\phi}(t)|$ becomes smaller than the lower bound for
$|\phi_{X_1}(t)|$ over $t\in[-h^{-1},h^{-1}].$ Of course different truncation methods are also possible and we refer e.g. to \cite{comte} for an alternative truncation method in the definition of an estimator of a L\'evy density in a problem similar to ours. We think that it is natural to incorporate the knowledge of $\Sigma$ in the selection of the threshold in \eqref{fn}, since the knowledge of $\Sigma$ is required anyway when selecting the bandwidth $h.$ With our choice of $h$ the set $G_t$ can also be characterised in terms of the sample size $n,$ because $h$ is a function of $n,$ see Condition \ref{conditionh}. Thus our truncation method is not dissimilar from the one in the deconvolution problem studied in \cite{neumann}.

Next we recall two conditions from \cite{gug3}, which were used to
study the asymptotics of the estimator $\hat{\sigma}^2.$ For the convenience of a reader we
also state a result on the asymptotic behaviour of its mean square
error. The latter is used in the proof of Theorem \ref{thm-rho}
below.

\begin{cnd}
\label{conditionv}
Let the kernel $v_h(t)=h^3v(ht),$ where the function $v$ is continuous and real-valued, has a support on $[-1,1]$ and is such that
\begin{equation*}
\int_{-1}^1v(t)dt=0, \quad \int_{-1}^1\left(-\frac{t^2}{2}\right)v(t)dt=1, \quad v(t)=O(t^{\beta}) \text{ as } t\rightarrow 0.
\end{equation*}
Here $\beta$ is the same as in Condition \ref{conditionf}.
\end{cnd}

It is for simplicity of the proofs that we assume that the smoothing parameter $h$ in the definition of $\hat{\sigma}^2$ is the same as in Condition \ref{conditionsigma}. In practice the two need not be equal, although they have to be of the same order.

\begin{cnd}
\label{conditionm} Let the truncating sequence $M=(M_n)_{n\geq 1}$
be such that $M_n=m_n h^{-2},$ where $m_n=|\log h|^{-1}.$
\end{cnd}

Here we implicitly assume that $n$ is large enough, so that $m_n$ is real and $m_n>0.$ Other conditions are also possible, ours is just one concrete example. The use of the truncation in the definition of $\hat{\sigma}^2$ in \eqref{sig2} is that it prevents the estimator from exploding: $|\hat{\phi}(t)|$ can in general take arbitrarily small values and $\log(|\hat{\phi}(t)|)$ consequently can become arbitrarily large.

In the remainder of the paper we will often use the symbols $\lesssim$ and $\gtrsim$ when comparing two sequences $a_n$ and $b_n,$ respectively meaning $a_n$ is less or equal than $b_n,$ or $a_n$ is greater or equal than $b_n$ up to a constant that does not depend on $n.$ The symbol $\asymp$ will be used to denote the fact that two sequences of real numbers are asymptotically of the same order.

\begin{thm}
\label{thm-sigmatilde} Denote by $\mathcal{T}$ the collection of
all L\'evy triplets satisfying Conditions
\ref{conditionf}--\ref{conditiongamma} and assume Conditions
\ref{conditionh}, \ref{conditionv} and \ref{conditionm}. Let the estimator
$\hat{\sigma}^2$ be defined by \eqref{sig2}. Then
\begin{equation*}
\suppp \ex[(\hat{\sigma}^2-\sigma^2)^2]\lesssim(\log
n)^{-\beta-3}
\end{equation*}
holds.
\end{thm}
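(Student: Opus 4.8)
The plan is to decompose the mean square error of $\hat{\sigma}^2$ into a squared bias term and a variance term, and to control each uniformly over the triplet class $\mathcal{T}.$ Starting from the definition \eqref{sig2}, the natural reference quantity is $\sigma^2,$ which can be recovered from the L\'evy--Khintchine exponent: since $\log|\phi_{X_1}(t)|=\operatorname{Re}\psi_1(t)=-\tfrac{1}{2}\sigma^2t^2+\int_{-\infty}^{\infty}(\cos(tx)-1)\rho(x)dx,$ the moment conditions imposed by Condition \ref{conditionv} on $v$ are precisely those that make $\int_{-1/h}^{1/h}\log|\phi_{X_1}(t)|\,v^h(t)dt$ reproduce $\sigma^2$ up to a bias coming from the jump part. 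First I would write the error as the sum of a deterministic approximation error, obtained by replacing the truncated log-empirical characteristic function by $\log|\phi_{X_1}|$ (no truncation), and a stochastic error measuring the fluctuation of the empirical quantity around its population counterpart.

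For the bias term, I would substitute the expression for $\log|\phi_{X_1}|$ into $\int \log|\phi_{X_1}(t)|v^h(t)dt$ and use the two moment conditions $\int_{-1}^1 v(t)dt=0$ and $\int_{-1}^1(-t^2/2)v(t)dt=1$ to see that the diffusion part returns exactly $\sigma^2,$ while the jump part $\int(\cos(tx)-1)\rho(x)dx$ contributes the bias. The key is that this jump contribution is smooth in $t$ near the origin, and the vanishing-moment structure of $v^h,$ combined with the condition $v(t)=O(t^{\beta})$ as $t\to 0$ and the Fourier-integrability bound $\int|t|^{\beta}|\phi_f(t)|dt\leq L$ from Condition \ref{conditionf}, yields a bias of order $h^{\beta+?}$; after rescaling $v^h(t)=h^3v(ht),$ this should translate into a bias whose square is of the required order $(\log n)^{-\beta-3},$ since $h=(\eta\log n)^{-1/2}$ makes powers of $h$ into negative powers of $\log n.$ I would track the exponents carefully: the factor $h^3$ together with the $O(t^\beta)$ behaviour of $v$ and the length $2h^{-1}$ of the integration interval should combine to give the stated rate.

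For the variance term, the main work is to show that the stochastic fluctuation of $\int \max\{\min\{M_n,\log|\hat{\phi}(t)|\},-M_n\}v^h(t)dt$ around its population version is small uniformly over $\mathcal{T}.$ Here the truncation by $M_n$ and the clipping of $\log|\hat{\phi}|$ keep the integrand bounded and prevent blow-up when $|\hat{\phi}(t)|$ is small, and I would exploit the uniform lower bound $|\phi_{X_1}(t)|\geq e^{-2\Lambda}e^{-\Sigma^2/(2h^2)}$ on $[-h^{-1},h^{-1}]$ (the same bound underlying Condition \ref{conditionkappa}) so that on the relevant event $\log|\hat{\phi}|$ is close to $\log|\phi_{X_1}|$ with high probability. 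The standard device is a maximal inequality for the empirical characteristic function process $\hat{\phi}(t)-\phi_{X_1}(t)$ on $[-h^{-1},h^{-1}],$ giving $\ex[\supp|\hat{\phi}(t)-\phi_{X_1}(t)|^2]$ of order $(\log n)/n$ up to the interval length, after which a delta-method/Lipschitz argument for $\log|\cdot|$ (valid because of the lower bound on $|\phi_{X_1}|$) transfers this to a bound on the fluctuation of the log. Since this result is quoted from \cite{gug3}, I would largely cite that argument, verifying only that the moment assumption $\int x^{12}f(x)dx\leq K$ supplies enough integrability to control the derivatives $\hat{\phi}',\hat{\phi}''$ that appear when bounding the process.

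The hard part will be the careful bookkeeping that shows the variance contribution is of the same or smaller order than the squared bias, i.e.\ also $O((\log n)^{-\beta-3}),$ uniformly over $\mathcal{T}.$ The difficulty is that the stochastic error is amplified by the exponentially large factor $e^{\Sigma^2/(2h^2)}=n^{\eta\Sigma^2/2}$ coming from the clipping level and the lower bound on $|\phi_{X_1}|$; the condition $\eta<\Sigma^{-2}/2$ in Condition \ref{conditionh} is exactly what keeps this factor strictly below $n^{1/2},$ so that when multiplied against the $n^{-1/2}$-type fluctuations of the empirical characteristic function it still vanishes polynomially fast and is therefore dominated by the logarithmic bias rate. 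I would expect most of the proof effort to go into making this exponent accounting rigorous and uniform, and into verifying that the truncation sequences $M_n=m_nh^{-2}$ and $m_n=\log\log(3n)$ from Condition \ref{conditionm} are chosen so that the probability of the "bad" event where clipping is active is negligible relative to $(\log n)^{-\beta-3}.$
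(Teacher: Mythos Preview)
The paper does not actually prove Theorem~\ref{thm-sigmatilde}: it is stated as a result imported from \cite{gug3}, with only the remark that the slight change in Condition~\ref{conditionf} relative to that reference does not affect the argument. So there is no ``paper's own proof'' to compare against beyond that citation.

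Your sketch is a faithful outline of the argument one expects in \cite{gug3}: the bias--variance split, the use of the moment conditions on $v$ (including $v(t)=O(t^\beta)$) to extract $\sigma^2$ and leave a jump-part bias of order $h^{\beta+3}$, the maximal inequality for $\hat\phi-\phi_{X_1}$ on $[-h^{-1},h^{-1}]$ combined with the lower bound $|\phi_{X_1}(t)|\geq e^{-2\Lambda}e^{-\Sigma^2/(2h^2)}$ to control $\log|\hat\phi|-\log|\phi_{X_1}|$, and the role of $\eta<\Sigma^{-2}/2$ in keeping the exponential blow-up below $n^{1/2}$. You also correctly note that the result is quoted and that the moment assumption $\int x^{12}f(x)dx\leq K$ is there to make the maximal inequalities uniform over $\mathcal{T}$. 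In short, your proposal matches the intended approach; the paper simply defers the details to the earlier reference.
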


Even though Condition \ref{conditionf} differs slightly from its counterpart in \cite{gug3}, this does not affect the proof of Theorem \ref{thm-sigmatilde}.   Although the convergence rate of the estimator $\hat{\sigma}^2$ is logarithmic, the contribution of $\hat{\sigma}^2$ to an upper bound on the mean square error of $\hat{\rho}(x)$ is asymptotically negligible compared to other terms, as can be seen from the proof of Theorem \ref{thm-rho}. By techniques similar to those used in \cite{atomic} in a related deconvolution problem, it is expected that under the same conditions on the class of L\'evy triplets as in Theorem \ref{thm-sigmatilde} one can prove that $\hat{\sigma}^2$ is rate-optimal, but since our emphasis in the present work is on estimation of a L\'evy density, we refrain from studying this question. For additional discussion on the estimator $\hat{\sigma}^2$ see \cite{gug3}.

Notice that had we not assumed $\nu(\mathbb{R})\leq \Lambda<\infty,$ there would not exist a uniformly consistent estimator of $\sigma^2,$ see Remark $3.2$ in \cite{reiss}. In fact even the existence of a consistent estimator of $\sigma^2$ is not clear in that general setting.

Together with the above theorem, an important tool in studying the estimator $\hat{\rho}$ is the following maximal inequality for the empirical characteristic function $\hat{\phi}(t)$ and its derivatives. Set $\hat{\phi}^{(0)}(t)=\hat{\phi}(t)$ and likewise $\phi_{X_1}^{(0)}(t)=\phi_{X_1}(t).$

\begin{thm}
\label{thm-ineq} Let $k\geq 0$ and $r\geq 1$ be integers. Then we have
\begin{multline}
\label{ineq-1}
\ex\left[
\left(\sup_{t\in[-h^{-1},h^{-1}]}|\hat{\phi}^{(k)}(t)-{\phi}^{(k)}_{X_1}(t)|\right)^r
\right]\\ \lesssim (\||x|^{k+1}\|_{\mathbb{L}_{2\vee
r}(\operatorname{P})}^r + \||x|^{k}\|_{\mathbb{L}_{2\vee
r}(\operatorname{P})}^r) \frac{1}{h^r n^{r/2}},
\end{multline}
provided $\||x|^{k+1}\|_{\mathbb{L}_{2\vee
r}(\operatorname{P})}$ is finite. Here the probability $\operatorname{P}$ on the righthand side refers to the law of $X_1,$ which is uniquely characterised by the triplet $(\gamma,\sigma^2,\rho).$
\end{thm}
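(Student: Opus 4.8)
The plan is to treat $g_k(t):=\hat{\phi}^{(k)}(t)-\phi_{X_1}^{(k)}(t)$ as a centred empirical process on the interval $I=[-h^{-1},h^{-1}]$ and to exploit its smoothness in $t$. Writing $\hat{\phi}^{(k)}(t)=n^{-1}\sum_{j=1}^n(iZ_j)^k e^{itZ_j}$ and $\phi_{X_1}^{(k)}(t)=\ex[(iZ_1)^k e^{itZ_1}]$, the map $t\mapsto g_k(t)$ is continuously differentiable with $g_k'(t)=g_{k+1}(t)$; the hypothesis that $\||x|^{k+1}\|_{\mathbb{L}_{2\vee r}(\operatorname{P})}$ is finite guarantees the relevant moments, so differentiation under the expectation defining $\phi_{X_1}^{(k)}$ is legitimate, and continuity makes $\sup_{t\in I}|g_k(t)|$ measurable. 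The workhorse is the elementary Sobolev-type inequality: for differentiable $G:I\to\mathbb{C}$, integrating $|G(t)|^2-|G(s)|^2=2\int_s^t\operatorname{Re}(\overline{G(u)}G'(u))\,du$ over $s\in I$ and applying Cauchy--Schwarz gives
$$\sup_{t\in I}|G(t)|^2\leq\frac{1}{|I|}\int_I|G(u)|^2\,du+2\left(\int_I|G(u)|^2\,du\right)^{1/2}\left(\int_I|G'(u)|^2\,du\right)^{1/2}.$$
Applied with $G=g_k$, $G'=g_{k+1}$ and $|I|=2h^{-1}$, this reduces the whole problem to controlling the moments of $\int_I|g_k|^2$ and $\int_I|g_{k+1}|^2$.

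Second, I would establish a pointwise moment bound. For fixed $t$ we have $g_k(t)=n^{-1}\sum_{j=1}^n Y_j$ with $Y_j=(iZ_j)^k e^{itZ_j}-\phi_{X_1}^{(k)}(t)$ i.i.d., mean zero, and $|Y_j|\leq|Z_j|^k+\ex[|Z_1|^k]$. By Rosenthal's inequality, for $r\geq 2$,
$$\ex[|g_k(t)|^r]\lesssim n^{-r/2}(\ex[|Y_1|^2])^{r/2}+n^{1-r}\ex[|Y_1|^r]\lesssim n^{-r/2}\||x|^k\|_{\mathbb{L}_{2\vee r}(\operatorname{P})}^r,$$
uniformly in $t$, where I used $\ex[|Y_1|^2]\leq\ex[|Z_1|^{2k}]$, Jensen's inequality, and $n^{1-r}\leq n^{-r/2}$. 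The exponent $2\vee r$ emerges precisely here: the variance term carries an $\mathbb{L}_2$-norm and the Rosenthal remainder an $\mathbb{L}_r$-norm.

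Third, I would assemble the pieces for $r\geq 2$. Minkowski's integral inequality yields $\|\int_I|g_k|^2\|_{\mathbb{L}_{r/2}(\operatorname{P})}\leq\int_I\|g_k(t)\|_{\mathbb{L}_r(\operatorname{P})}^2\,dt\lesssim h^{-1}n^{-1}\||x|^k\|_{\mathbb{L}_{2\vee r}(\operatorname{P})}^2$, and likewise for $g_{k+1}$ with exponent $k+1$. Taking the $(r/2)$-th power of the Sobolev bound, using $(a+b)^{r/2}\lesssim a^{r/2}+b^{r/2}$ and treating the cross term by Cauchy--Schwarz in $\operatorname{P}$ followed by the arithmetic--geometric mean inequality, the leading summand contributes order $n^{-r/2}\||x|^k\|^r$ and the cross term order $(hn)^{-r/2}(\||x|^k\|^r+\||x|^{k+1}\|^r)$ (all norms in $\mathbb{L}_{2\vee r}(\operatorname{P})$). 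Since $h\to 0$, both are majorised by $h^{-r}n^{-r/2}(\||x|^k\|^r+\||x|^{k+1}\|^r)$, which is the claimed bound. The case $r=1$ then follows from the case $r=2$ by Jensen's inequality, $\ex[\sup_I|g_k|]\leq(\ex[(\sup_I|g_k|)^2])^{1/2}$, together with $\sqrt{a^2+b^2}\leq a+b$, noting $2\vee 1=2$.

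The main obstacle is the first step: obtaining the correct power of $h$. A crude discretisation of $I$ combined with a union bound would lose logarithmic or polynomial factors in $h$; it is the smoothness of the empirical characteristic function, captured through the Sobolev inequality relating the supremum to the $\mathbb{L}_2$-norms of $g_k$ and its derivative $g_{k+1}$, that delivers the sharp dependence (indeed the argument yields $h^{-r/2}$, comfortably inside the stated $h^{-r}$). The remaining steps, the pointwise Rosenthal estimate and the Minkowski/Cauchy--Schwarz bookkeeping, are routine once this reduction is in place, the only care being the consistent appearance of the exponent $2\vee r$.
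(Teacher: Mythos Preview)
Your argument is correct and takes a genuinely different route from the paper. The paper treats $\hat{\phi}^{(k)}-\phi_{X_1}^{(k)}$ as an empirical process indexed by $t\in[-h^{-1},h^{-1}]$, splits into real and imaginary parts, uses the Lipschitz bound $|v_{t,k,j}(x)-v_{s,k,j}(x)|\leq |x|^{k+1}|t-s|$ to control bracketing numbers, takes $|x|^{k+1}h^{-1}$ (plus $|x|^k$ for the sine part) as an envelope, and then invokes the maximal inequality of van der Vaart and Wellner (their Theorem~2.14.1), which delivers the bound $\|F_{h,j}\|_{\mathbb{L}_{2\vee r}(\operatorname{P})}^r$ times a finite entropy integral. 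The exponent $2\vee r$ there comes from the van der Vaart--Wellner theorem itself, and the $h^{-r}$ comes from the envelope.

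Your approach is more elementary: the Sobolev-type inequality replaces the entropy machinery entirely, reducing the supremum to $\mathbb{L}_2$-integrals of $g_k$ and $g_{k+1}$ over $I$, which are then controlled pointwise by Rosenthal and assembled by Minkowski. The $2\vee r$ exponent arises naturally from the two Rosenthal terms. A pleasant by-product, as you note, is that your argument actually yields $h^{-r/2}n^{-r/2}$ rather than the paper's $h^{-r}n^{-r/2}$; the sharper power of $h$ comes because your Sobolev bound captures the smoothness of $g_k$ more efficiently than the envelope $|x|^{k+1}h^{-1}$, which is crude by a factor $|t|\leq h^{-1}$. The paper's empirical-process route has the advantage of being a black-box application of standard machinery that would adapt readily to other index sets or function classes; yours is self-contained and avoids any appeal to bracketing or covering-number technology.
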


The theorem constitutes a generalisation of the corresponding
result for $\hat{\phi}$ and $r=2$ given in \cite{gug3}. The theorem is of possible general interest as well. For related results on the empirical characteristic function see Theorem 1 in \cite{devroye} and Theorem 4.1 in \cite{reiss}.

Equipped with the above two theorems, we are now ready to formulate the first main result of the
paper, which concerns the mean square error of the estimator
$\hat{\rho}$ at a fixed point $x\neq 0.$ Notice that we prefer to work with asymptotics uniform in L\'evy triplets, since existence of the superefficiency phenomenon in nonparametric estimation makes it difficult to interpret fixed parameter asymptotics, see e.g.\ \cite{brown} for a discussion. This also explains why we imposed certain smoothness assumptions on the class of L\'evy densities: too large a class of densities, e.g.\ of all continuous densities, usually cannot be handled when dealing with uniform asymptotics, see e.g. Theorem $1$ on p.\ $36$ in \cite{gyorfi} for an example from probability density estimation.

\begin{thm}
\label{thm-rho} Denote by $\mathcal{T}$ the collection of all
L\'evy triplets satisfying Conditions
\ref{conditionf}--\ref{conditiongamma} and assume Conditions
\ref{conditionh}--\ref{conditionm}. Let the estimator $\hat{\rho}$
be defined by \eqref{fn}. Then we have
\begin{equation*}
\suppp \operatorname{E}[(\hat{\rho}(x)-\rho(x))^2]\lesssim (\log
n)^{-\beta}
\end{equation*}
for every fixed $x\neq 0.$
\end{thm}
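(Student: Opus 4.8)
The plan is to reduce the problem to estimation of $g(x):=x^2\rho(x)$ at the fixed point $x\neq0$. Since the prefactor $x^{-2}$ in \eqref{fn} is a constant, $\hat\rho(x)=x^{-2}\hat g(x)$ with $\hat g(x)=\frac{1}{2\pi}\infint e^{-itx}\hat D(t)\phi_w(ht)\,dt$, where $\hat D$ is the bracketed integrand in \eqref{fn}, and $\operatorname{E}[(\hat\rho(x)-\rho(x))^2]=x^{-4}\operatorname{E}[(\hat g(x)-g(x))^2]$. Writing $\phi_g(t):=\infint e^{ity}y^2\rho(y)\,dy$ for the integrand appearing in \eqref{inv}, so that $g(x)=\frac{1}{2\pi}\infint e^{-itx}\phi_g(t)\,dt$, and using $\phi_w(ht)=1_{[-1/h,1/h]}(t)$, I would insert the deterministic smoothed target $g_h(x)=\frac{1}{2\pi}\int_{-1/h}^{1/h}e^{-itx}\phi_g(t)\,dt$ and split
\[\operatorname{E}[(\hat g(x)-g(x))^2]\le2(g_h(x)-g(x))^2+2\operatorname{E}[(\hat g(x)-g_h(x))^2]\]
into a deterministic smoothing bias and a stochastic term.

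For the bias, $g_h(x)-g(x)=-\frac{1}{2\pi}\int_{|t|>1/h}e^{-itx}\phi_g(t)\,dt$, hence $|g_h(x)-g(x)|\le\frac{h^\beta}{2\pi}\infint|t|^\beta|\phi_g(t)|\,dt$. The first step is to verify that the class $W(\beta,L,K,\Lambda)$ forces $\infint|t|^\beta|\phi_g(t)|\,dt$ to be bounded by a constant depending only on $\beta,L,K,\Lambda$: this uses $\phi_g=-\nu(\mathbb{R})\phi_f''$ with $\nu(\mathbb{R})\le\Lambda$, together with the fact that multiplication by $y^2$ preserves the $\beta$-smoothness encoded in $\infint|t|^\beta|\phi_f(t)|\,dt\le L$ thanks to the moment bound $\infint y^{12}f(y)\,dy\le K$. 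This yields $(g_h(x)-g(x))^2\lesssim h^{2\beta}=(\eta\log n)^{-\beta}$ uniformly over $\mathcal{T}$, and this is the term that carries the advertised rate.

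For the stochastic term, write $\hat g(x)-g_h(x)=\frac{1}{2\pi}\int_{-1/h}^{1/h}e^{-itx}(\hat D(t)-D(t))\,dt$ with $D(t)=\phi_g(t)$ and $\hat D-D=A_1(t)+A_2(t)-(\hat\sigma^2-\sigma^2)$, where $A_2(t)=-\bigl(\phi_g(t)+\sigma^2\bigr)1_{G_t^c}$ is the truncation error and $A_1$ gathers the differences of the empirical ratios restricted to $G_t$. The term $\hat\sigma^2-\sigma^2$ is constant in $t$, so its integral equals $(\hat\sigma^2-\sigma^2)\sin(x/h)/(\pi x)$, which by Theorem \ref{thm-sigmatilde} contributes only $O((\log n)^{-\beta-3})$. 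For $A_2$ the key is the uniform lower bound $|\phi_{X_1}(t)|\ge e^{-2\Lambda}e^{-\Sigma^2/(2h^2)}$ on $[-1/h,1/h]$, obtained from $\operatorname{Re}\psi_1(t)\ge-\tfrac12\sigma^2t^2-2\nu(\mathbb{R})$ with $\sigma\le\Sigma$ and $\nu(\mathbb{R})\le\Lambda$; since $\kappa_n\to0$, for large $n$ the event $G_t^c$ forces $|\hat\phi(t)-\phi_{X_1}(t)|\ge\tfrac12 e^{-2\Lambda}e^{-\Sigma^2/(2h^2)}$, so Theorem \ref{thm-ineq} with $k=0$, $r=4$ and Markov's inequality give $\operatorname{P}(G_t^c)\lesssim h^{-4}n^{-2}e^{2\Sigma^2/h^2}=(\eta\log n)^2 n^{-2(1-\Sigma^2\eta)}$. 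Combined with the crude bound $|\phi_g(t)+\sigma^2|\lesssim e^{\Sigma^2/h^2}=n^{\Sigma^2\eta}$, this makes the contribution of $A_2$ of polynomial order $n^{-(2-4\Sigma^2\eta)}$ up to logarithmic factors, negligible precisely because $\eta<\Sigma^{-2}/2$.

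The main obstacle is $A_1$. Here I would expand
\[\frac{\hat\phi''}{\hat\phi}-\frac{\phi_{X_1}''}{\phi_{X_1}}=\frac{\hat\phi''-\phi_{X_1}''}{\hat\phi}-\frac{\phi_{X_1}''}{\phi_{X_1}}\cdot\frac{\hat\phi-\phi_{X_1}}{\hat\phi},\qquad\Big(\frac{\hat\phi'}{\hat\phi}\Big)^2-\Big(\frac{\phi_{X_1}'}{\phi_{X_1}}\Big)^2=\Big(\frac{\hat\phi'}{\hat\phi}-\frac{\phi_{X_1}'}{\phi_{X_1}}\Big)\Big(\frac{\hat\phi'}{\hat\phi}+\frac{\phi_{X_1}'}{\phi_{X_1}}\Big),\]
and bound the denominators on $G_t$ by $|\hat\phi(t)|^{-1}\le\kappa_n^{-1}e^{\Sigma^2/(2h^2)}=\kappa_n^{-1}n^{\Sigma^2\eta/2}$, the ratios $|\phi_{X_1}'/\phi_{X_1}|=|\psi_1'(t)|\lesssim h^{-1}$ and $|\phi_{X_1}''/\phi_{X_1}|\lesssim h^{-2}$ (from differentiating the L\'evy exponent, cf.\ \eqref{ratio}--\eqref{psiderivative2}), and the fluctuations $\supp|\hat\phi^{(k)}-\phi_{X_1}^{(k)}|$ for $k=0,1,2$ via Theorem \ref{thm-ineq}. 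The worst term carries two factors $|\hat\phi|^{-1}$, producing $\kappa_n^{-2}n^{\Sigma^2\eta}$, times a factor of order $h^{-2}n^{-1/2}$ from the empirical fluctuations; inserting the extra $h^{-1}$ arising from bounding $\bigl|\int_{-1/h}^{1/h}\,\cdot\,dt\bigr|$ by $2h^{-1}\supp|\cdot|$, then squaring and taking expectations (the moment factors in Theorem \ref{thm-ineq} are uniformly bounded over $\mathcal{T}$ by the twelfth-moment condition and Corollary 25.8 in \cite{sato}) yields a bound of order $\kappa_n^{-4}h^{-6}n^{2\Sigma^2\eta-1}$ for the contribution of $A_1$. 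Since $\eta<\Sigma^{-2}/2$ this is $n^{-(1-2\Sigma^2\eta)}$ up to logarithmic factors, hence $o((\log n)^{-\beta})$. Collecting the three stochastic pieces with the bias, dividing by $x^4$, and noting that every estimate is uniform in the triplet, gives $\suppp\operatorname{E}[(\hat\rho(x)-\rho(x))^2]\lesssim(\log n)^{-\beta}$, with the smoothing bias as the only term of this exact order.
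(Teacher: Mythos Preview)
Your treatment of the stochastic term is in essence the paper's: separate the $\hat\sigma^2-\sigma^2$ piece (Theorem~\ref{thm-sigmatilde}), the truncation error on $G_t^c$ (bound $\operatorname{P}(G^*)$ via Chebyshev and Theorem~\ref{thm-ineq}), and the empirical--ratio fluctuations on $G_t$ (telescoping plus Theorem~\ref{thm-ineq} combined with Cauchy--Schwarz). A few constants are needlessly crude---for instance $|\phi_g(t)+\sigma^2|=\bigl|\int e^{ity}y^2\rho(y)\,dy+\sigma^2\bigr|\le\Lambda K^{1/6}+\Sigma^2$ is a constant, not $e^{\Sigma^2/h^2}$---but the argument goes through.

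The bias step, however, has a real gap. By centering at $g_h$ you need
\[
\int_{-\infty}^{\infty} |t|^\beta|\phi_g(t)|\,dt=\nu(\mathbb{R})\int_{-\infty}^{\infty} |t|^\beta|\phi_f''(t)|\,dt
\]
to be bounded uniformly over $W(\beta,L,K,\Lambda)$. Nothing in Condition~\ref{conditionf} gives this: the smoothness hypothesis is $\int|t|^\beta|\phi_f(t)|\,dt\le L$ (on $\phi_f$, not on $\phi_f''$); ``$\phi_f''$ integrable'' is purely qualitative, with no uniform constant and no weight $|t|^\beta$; and the twelfth-moment bound on $f$ controls the size of derivatives of $\phi_f$ at the origin, not the decay of $\phi_f''$ at infinity. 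Your one-line claim that ``multiplication by $y^2$ preserves the $\beta$-smoothness\ldots thanks to the moment bound'' is not a proof, and there is no reason it should hold with a uniform constant over the class.

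The paper avoids this entirely by centering the bias not at $g_h/x^2$ but at
\[
\widetilde\rho(x)=\frac{1}{2\pi}\int_{-1/h}^{1/h}e^{-itx}\phi_\rho(t)\,dt,
\]
so that $|\rho(x)-\widetilde\rho(x)|\le\frac{\Lambda}{2\pi}\int_{|t|>1/h}|\phi_f(t)|\,dt\le\frac{\Lambda L}{2\pi}h^\beta$ follows \emph{directly} from the hypothesis on $\phi_f$; only afterwards does it rewrite $\widetilde\rho$ via the $\Phi(\phi_{X_1})-\sigma^2$ representation in order to compare with $\hat\rho$ in the stochastic term. That choice of intermediate target---tailored to the smoothness assumption actually present in Condition~\ref{conditionf}---is the decisive difference between your proposal and the paper's argument.
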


Thus the convergence rate of our estimator turns out to be
logarithmic, just as for the estimator of $\rho$ proposed in \cite{gug3}.
This result can be easily understood on an intuitive level by
comparison to a nonparametric deconvolution problem: if the
distribution of the measurement error in a deconvolution model is
normal, and if the class of the target densities is massive enough,
e.g.\ some H\"older or Sobolev class (see Definitions 1.2 and 1.11 in \cite{tsyb}), the
minimax convergence rate for estimation of an unknown density will
be logarithmic for both the mean squared error and mean integrated
squared error as measures of risk, see \cite{fan1} and
\cite{fan2}. Of course the same holds true also for deconvolution
models with unknown error variance, see \cite{matias} and
\cite{meister}. Exactly as kernel-type estimators in semiparametric deconvolution problems, our estimator $\hat{\rho}$ also involves division by an estimator of a characteristic function (or to be more precise by its square), a slight difference being that in semiparametric deconvolution problems we divide by an estimator of the characteristic function of the measurement error variable, while in the definition of $\hat{\rho}$ we divide by $\hat{\phi},$ an estimator of $\phi_{X_1}.$ For large enough $n$ the empirical characteristic function $\hat{\phi}$ should be close to the true characteristic function $\phi_{X_1}$ on the interval $[-h^{-1},h^{-1}].$ Since up to a constant term, $\phi_{X_1}$ behaves at plus and minus infinity as a normal characteristic function, the logarithmic convergence rate of the estimator $\rho$ is then no surprise. Exactly as in normal deconvolution problem over a H\"older or Sobolev class of densities, cf.\ \cite{fan1} and \cite{fan2}, it is due to the dominating squared bias of $\hat{\rho},$ i.e.\ roughly speaking the term $T_1$ in the proof of Theorem \ref{thm-rho}. More formally, in the theorem given below we actually prove that our estimator $\hat{\rho}$ attains the minimax convergence rate for estimation of the L\'evy density $\rho$ at a fixed point $x$ over a suitable class of L\'evy triplets when the risk is measured by the mean square error.

\begin{thm}
\label{lowbound-thm} Let $T$ be a L\'evy triplet
$(\gamma,\sigma^2,\rho),$ such that
$|\gamma|\leq\Gamma,$ $\sigma\in[0,\Sigma],$ $\nu(\mathbb{R})\in(0,\Lambda],$ where $\Gamma,\Sigma$ and $\Lambda$ are strictly positive constants.
Assume furthermore that
\begin{equation}
\label{fcnd} \infint |t|^{\beta}|\phi_f(t)|dt\leq L; \quad |\phi_f(t)|\leq\frac{L^{\prime}}{|t|^{\beta}}; \quad |\phi_f^{\prime}(t)|\leq\frac{L^{\prime}}{|t|^{\beta}}
\end{equation}
for strictly positive constants $\beta,L,L^{\prime}$ and $L^{\prime\prime}.$ Let $\mathcal{T}$ be a collection of all L\'evy
triplets satisfying these conditions. Then for every fixed $x\neq 0$ we have
\begin{equation}
\label{lowerbound}
\inf_{\widetilde{\rho}_n}\sup_{\mathcal{T}}\operatorname{E}[(\widetilde{\rho}_{n}(x)-\rho(x))^2]\gtrsim
(\log n)^{-\beta},
\end{equation}
where the infimum on the lefthand side is taken over all estimators
$\widetilde{\rho}_n$ based on observations $X_1,\ldots,X_n.$
\end{thm}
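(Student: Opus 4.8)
The plan is to prove the bound by the two-point (Le Cam) method: I would exhibit two L\'evy triplets in $\mathcal{T}$ whose values of $\rho$ at the fixed point $x$ differ by an amount of order $(\log n)^{-\beta/2}$, while the laws of the data they generate stay statistically indistinguishable, in the sense that the Kullback--Leibler divergence between the two $n$-fold product measures remains bounded. Concretely, fix $\gamma_0=0$ and $\sigma_0=\Sigma$, and let $\rho_0=\nu_0 f_0$ be a fixed, smooth jump density with heavy polynomial tails (say $f_0(y)\asymp(1+y^2)^{-N/2}$ with $N$ large), chosen so that the smoothness and moment budgets in Condition \ref{conditionf} hold with strict inequality, e.g.\ $\infint|t|^\beta|\phi_{f_0}(t)|\,dt\le L/2$ and $\nu_0<\Lambda$. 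I would then perturb only the jump density, setting
$$
\rho_1(y)=\rho_0(y)+\delta_n\,\cos\big(T(y-x)\big)\,K(y),
$$
where $K$ is a fixed band-limited bump with $K(x)=1$, the frequency is $T=T_n\asymp\sqrt{\log n}$, and $\delta_n\asymp T^{-\beta}\asymp(\log n)^{-\beta/2}$. Centring the modulation at $x$ gives $|\rho_1(x)-\rho_0(x)|=\delta_n$, so the squared separation at $x$ is exactly of the target order $(\log n)^{-\beta}$.

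Next I would verify that $(0,\Sigma^2,\rho_1)$ again lies in $\mathcal{T}$ for all large $n$. Since the Fourier transform $\widehat{\eta}$ of the perturbation $\eta:=\rho_1-\rho_0$ is supported in narrow bands around $\pm T$, the smoothness functional contributes $\int|t|^\beta|\widehat{\eta}(t)|\,dt\lesssim\delta_n T^\beta$, which is bounded and keeps $\infint|t|^\beta|\phi_{f_1}(t)|\,dt\le L$; integrability of $\phi_{f_1}''$, the twelfth-moment bound, and $\nu_1(\mathbb{R})\in(0,\Lambda]$ follow because $\eta$ is a tiny, rapidly decaying perturbation (band-limited support also forces $\int\eta=\widehat\eta(0)=0$). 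Nonnegativity of $\rho_1$ is where the heavy tails of $\rho_0$ matter: on any compact set $\rho_0$ is bounded below and $\delta_n\to0$, while in the tails the polynomial decay of $\rho_0$ eventually dominates the faster-decaying envelope of $\eta$, so $\rho_1\ge0$ for $n$ large.

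The analytic heart of the argument is the indistinguishability bound. Keeping $\gamma$ and $\sigma$ fixed and perturbing only $\rho$ makes the two L\'evy exponents of $X_1$ differ by exactly $\widehat{\eta}$, so that $\phi^{(1)}_{X_1}(t)=\phi^{(0)}_{X_1}(t)\,e^{\widehat{\eta}(t)}$, and for the densities $p_0,p_1$ of $X_1$, Plancherel gives
$$
\int\big(p_1-p_0\big)^2\,dy=\frac{1}{2\pi}\int\big|\phi^{(0)}_{X_1}(t)\big|^2\big|e^{\widehat{\eta}(t)}-1\big|^2\,dt\lesssim\int e^{-\sigma_0^2t^2}\,|\widehat{\eta}(t)|^2\,dt\lesssim\delta_n^2\,e^{-\sigma_0^2T^2}.
$$
Here it is essential that $\widehat{\eta}$ be supported away from the origin (hence the band-limited perturbation), because only then does the Gaussian factor $|\phi^{(0)}_{X_1}(t)|^2\asymp e^{-\sigma_0^2t^2}$ deliver the decisive damping $e^{-\sigma_0^2T^2}$ at frequency $T$; a low-frequency leak would destroy the exponential gain. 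To pass from this $\mathbb{L}_2$ bound to a genuine divergence I would use $\operatorname{KL}(P_0\|P_1)\le\chi^2(P_0\|P_1)=\int(p_1-p_0)^2/p_0\,dy$ and control the weight $1/p_0$ through the polynomial lower bound $p_0(y)\gtrsim(1+|y|)^{-N}$, inherited from the heavy tails of $\rho_0$ via the convolution structure of the compound-Poisson-plus-Gaussian law; this makes $\int(p_1-p_0)^2/p_0\lesssim\delta_n^2 e^{-\sigma_0^2T^2}\,\mathrm{poly}(T)$. Choosing $T^2=c\log n$ with $c>\Sigma^{-2}$ then yields $n\,\operatorname{KL}(P_0\|P_1)\lesssim n^{1-\sigma_0^2 c}(\log n)^{-\beta}\,\mathrm{poly}(\sqrt{\log n})\to0$, so the product measures are asymptotically indistinguishable. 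Tsybakov's two-point lemma (see \cite{tsyb}) then gives $\inf_{\widetilde\rho_n}\max_{j\in\{0,1\}}\operatorname{E}_j[(\widetilde\rho_n(x)-\rho_j(x))^2]\gtrsim\delta_n^2\asymp(\log n)^{-\beta}$, and since both triplets lie in $\mathcal{T}$ the supremum over $\mathcal{T}$ is at least as large, which is \eqref{lowerbound}. I expect the main obstacle to be precisely the conversion of the characteristic-function/$\mathbb{L}_2$ estimate into the $\chi^2$ bound: one must reconcile the Gaussian-fast decay that $p_0$ would naively have with the merely Schwartz-fast decay of $p_1-p_0$, and it is the deliberate choice of a heavy-tailed baseline $\rho_0$ that simultaneously secures nonnegativity of $\rho_1$ and tames the weight $1/p_0$ in the tails.
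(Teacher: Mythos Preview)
Your proposal is correct and follows essentially the same two-hypothesis strategy as the paper: fix $\gamma$ and $\sigma$, take a heavy-tailed baseline jump density, add a perturbation whose Fourier transform lives at frequencies of order $\sqrt{\log n}$, and bound the $\chi^2$-divergence using the Gaussian damping $e^{-\sigma^2 t^2}$ together with a polynomial lower bound on the observation density. Your modulated bump $\delta_n\cos(T(\cdot-x))K$ and the paper's rescaled $\delta_n^{\beta}H((\cdot-x)/\delta_n)$ with $\operatorname{supp}\phi_H\subset[1,2]$ are Fourier-equivalent constructions.

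The one place where the paper is more concrete than your sketch is exactly the step you flag as the main obstacle. Rather than a generic $f_0\asymp(1+y^2)^{-N/2}$, the paper builds $f_1$ as a mixture containing a symmetric stable density with index $\alpha_2\in(1,2)$, so that $f_1(u)\asymp|u|^{-1-\alpha_2}$ at infinity with $1+\alpha_2<3$. This pins the tail weight at $1/f_1(|u|+A)\lesssim u^4$, and then the passage from the $\mathbb{L}_2$ bound to $\chi^2$ reduces to a single Parseval identity for $\int u^4(q_2-q_1)^2\,du=\tfrac{1}{2\pi}\int|(\phi_{q_2}-\phi_{q_1})''|^2\,dt$, i.e.\ only the second derivative enters and your unspecified $\mathrm{poly}(T)$ becomes an explicit quartic in $T$. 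Your route with general $N$ works too, but you would need $N$ even and the $(N/2)$-th derivative of $\phi_{q_1}-\phi_{q_0}$; the paper's calibrated tail exponent is what keeps this bookkeeping minimal.
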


The proof of the theorem is such that it also works for the case when $\sigma>0$ is assumed known and is fixed. Therefore the knowledge of $\sigma$ does not lead to some estimator of $\rho$ with a better rate of convergence. This is unlike the semiparametric deconvolution problem with unknown error variance, see \cite{matias}, where the fact that the measurement error variance is unknown slows down even further the convergence rate. Disregarding the moment condition in Condition \ref{conditionf}, an easy consequence of Theorems \ref{thm-rho} and \ref{lowbound-thm} is that $\hat{\rho}$ is rate-optimal.

A slow, logarithmic convergence rate of $\hat{\rho}$ seems to indicate that samples of very large size are needed to accurately estimate $\rho.$ However, it is known that in deconvolution problems kernel-type density estimators perform well for reasonable sample sizes, provided the noise term variance is not too large, see e.g.\ \cite{delaigle0}, \cite{shota} or \cite{wand}. Likewise, a spectral cut-off method of \cite{belomestny} and \cite{belom} produces good results for small values of $\sigma$ in the problem of calibration of exponential L\'evy models. Since in the financial setting it is perhaps unnatural to assume that $\sigma$ is known and $\sigma\rightarrow 0$ as $n\rightarrow\infty,$ which constitutes the mathematical formalisation of the statement that in the asymptotic setting the noise level is low, and since in the present work we are mainly concerned with asymptotics, we will explore a different possibility, namely that the L\'evy density is much smoother than the H\"older or Sobolev class L\'evy densities. Our results will parallel those from \cite{but}, where it is shown in the deconvolution context that better than logarithmic convergence rates can be obtained in case when the target density is supersmooth itself, i.e.\ essentially has a characteristic function that decays exponentially fast at plus and minus infinity.

We first give a condition on the class of L\'evy densities.

\begin{cnd}
\label{conditionf2} Let the unknown L\'evy density $\rho$ belong to the class
\begin{align*}
A(\alpha,s,L,L^{\prime},L^{\prime\prime},K,\Lambda)=\Bigl\{ &\rho:\rho(x)=\nu(\mathbb{R})
f(x),f\text{ is a probability density},\\
&\infint |\phi_f(t)|^2 \exp(2\alpha |t|^s) dt\leq L,\\
&\frac{|\phi_f(t)|}{|t|^{(1-s)/2}e^{-\alpha |t|^s}}\leq L^{\prime} ,\\
&\frac{|\phi_f^{\prime}(t)|}{|t|^{(1-s)/2}e^{-\alpha |t|^s}}\leq L^{\prime\prime} ,\\
&\infint x^{12}f(x)dx\leq K,\\
&\phi_f^{\prime\prime}\text{ is integrable},\\
&\nu(\mathbb{R})\in(0,\Lambda]\Bigr\},
\end{align*}
where $\alpha,s,L,K$ and $\Lambda$ are strictly positive numbers.
\end{cnd}

The `size' of the class $A(\alpha,s,L,L^{\prime},L^{\prime\prime},K,\Lambda)$ is much smaller than the `size' of the class $W(\beta,L,L^{\prime},L^{\prime\prime},K,\Lambda),$ and it is intuitively clear that better convergence rates can be expected for estimation of $\rho$ over the former class than over the latter class. We will refer to the class $A(\alpha,s,L,L^{\prime},L^{\prime\prime},K,\Lambda)$ as the class of supersmooth L\'evy densities.

Since the estimator $\hat{\rho}$ depends on the estimator $\hat{\sigma}^2,$ we first need to study the asymptotics of the latter. With a different class of L\'evy densities than in Theorem \ref{thm-sigmatilde}, the conditions on the bandwidth $h$ and kernel $v_h$ have to be modified accordingly. These are supplied below.

\begin{cnd}
\label{conditionh2} Let the bandwidth $h$ depend on $n$ and be
such that $h$ is a positive solution of the equation
\begin{equation}
\label{bandwidth2}
\frac{2\alpha}{h^s}+\frac{2\Sigma^2}{h^2}=\log n - (\log\log n)^2.
\end{equation}
\end{cnd}
Here we thus suppose that $s$ is known. We also assume that $n$ is large enough, so that equation \eqref{bandwidth2} indeed has a positive root. Condition \ref{conditionh2} is motivated by a similar condition on the bandwidth in the deconvolution problem studied in \cite{but}. An optimal bandwidth, i.e. a bandwidth that asymptotically minimises the risk of the estimator (or an upper bound on it), is typically computed in kernel estimation by differentiating an upper bound on the risk of the estimator with respect to $h,$ setting the derivative to zero and solving $h$ from the obtained equation. However, in our case an optimal $h$ can also be computed from \eqref{bandwidth2}, cf.\ Section 3 in \cite{but}, and we give the corresponding argument in the proof of Theorem \ref{thm-rho2}. The two methods of course yield the same asymptotic results.

\begin{cnd}
\label{conditionv2}
Let the kernel $v_h(t)=h^3v(ht),$ where the function $v$ is continuous and real-valued, has a support on $[-\sqrt{2},-1]\bigcup[1,\sqrt{2}]$ and is such that
\begin{equation*}
\int_{\mathbb{R}}v(t)dt=0, \quad \int_{\mathbb{R}}\left(-\frac{t^2}{2}\right)v(t)dt=1.
\end{equation*}
\end{cnd}

Instead of defining the support of $v$ by $[-\sqrt{2},-1]\bigcup[1,\sqrt{2}],$ we could have defined it as $[-a,-1]\bigcup[1,a]$ for $1<a\leq\sqrt{2},$ which would result in a better convergence rate for $\hat{\sigma}^2.$ However, $a=\sqrt{2}$ actually suffices for the purpose of estimation of $\rho,$ as a contribution of $\hat{\sigma}^2$ to an upper bound on the risk of $\hat{\rho}$ will still be asymptotically of at most the same order as that of other terms, cf.\ the proof of Theorem \ref{thm-rho2}. We do not address the problem of constructing a rate-optimal estimator of $\sigma^2$ in the present paper.

The following result holds true.

\begin{thm}
\label{thmsigmatilde2}
Denote by $\mathcal{T}$ the collection of
all L\'evy triplets satisfying Conditions
\ref{conditionsigma}, \ref{conditiongamma} and \ref{conditionf2} and assume that Conditions \ref{conditionm}, \ref{conditionh2} and  \ref{conditionv2} hold. Let $s<2$ and let the estimator
$\hat{\sigma}^2$ be defined by \eqref{sig2}. Then
\begin{equation*}
\suppp \ex[(\hat{\sigma}^2-\sigma^2)^2]\lesssim h^{s+5}\exp\left( -\frac{2\alpha}{h^s}\right)
\end{equation*}
holds, where $h$ is defined in Condition \ref{conditionh2}.
\end{thm}

The asymptotics of the estimator $\hat{\sigma}^2$ (and also those of $\hat{\rho}$) change qualitatively when $s>2.$ In particular, the convergence rate of $\hat{\sigma}^2$ becomes polynomial. Although supersmooth densities with $s>2$ are in principle conceivable, they do not include well-known representatives of the class of supersmooth densities, cf.\ a relevant discussion in \cite{but}. Therefore without much loss of generality we assume that $s<2.$

With the above result we can finally study the asymptotics of $\hat{\rho}$ over the class of supersmooth L\'evy densities.

\begin{thm}
\label{thm-rho2} Suppose that conditions of Theorem \ref{thmsigmatilde2} are satisfied and let in addition Condition \ref{conditionkappa} hold. Then we have
\begin{equation*}
\suppp \operatorname{E}[(\hat{\rho}(x)-\rho(x))^2]\lesssim h^{s-1}\exp\left( -\frac{2\alpha}{h^s} \right)
\end{equation*}
for every fixed $x\neq 0.$ In particular, for $s=1$ an upper bound
\begin{equation}
\label{s1}
\suppp \operatorname{E}[(\hat{\rho}(x)-\rho(x))^2]\lesssim \exp\left( -{2\alpha} \left(\frac{\log n}{2\Sigma^2}\right)^{1/2}\right)
\end{equation}
is valid.
\end{thm}

Since $h\asymp (\log n)^{-1/2},$ which can be shown as formula (27) of \cite{but}, it is easy to see that the convergence rate of $\hat{\rho}$ is faster than any power of $\log n$ and hence much better than that in Theorem \ref{thm-rho}. The case $s=1$ is particularly interesting, as it corresponds to the class of L\'evy densities that admit an analytic continuation into a strip of the complex plane.

A natural question is whether $\hat{\rho}$ is rate-optimal over a class of supersmooth L\'evy densities. We will not provide a formal statement and its proof, but instead will restrict ourselves to an intuitive discussion, which we hope is more enlightening. To answer the question of rate-optimality of $\hat{\rho},$ one has first to establish a lower bound for estimation of $\rho(x)$ over a class of supersmooth L\'evy densities. Disregarding the moment condition in Condition \ref{conditionf2}, this can be done by following a general scheme of the proof of Theorem \ref{lowbound-thm} combined with some of the techniques from \cite{matias}, \cite{tsyb1} or  \cite{atomic}. This lower bound will be similar to the one given in Theorem 4 in \cite{tsyb1} and in fact for $s=1$ one will have
\begin{equation}
\label{lobnd}
\inf_{\widetilde{\rho}_n}\sup_{\mathcal{T}}\operatorname{E}[(\widetilde{\rho}_{n}(x)-\rho(x))^2]\gtrsim
\exp\left( -{2\alpha} \left(\frac{\log n}{\Sigma^2}\right)^{1/2}\right),
\end{equation}
where the infimum is taken over the class of all estimators $\widetilde{\rho}$ based on a sample $X_1,\ldots,X_n$ from the process $X.$ Unfortunately, the lower bound in \eqref{lobnd} is too small in comparison to the upper bound in \eqref{s1}. Although we are not completely sure, we still think that the lower and upper risk bounds that we give in Theorem \eqref{s1} and \eqref{lobnd} are sharp as far as their rates of decay are concerned: we think that it is the estimator $\hat{\rho}$ that cannot attain the minimax convergence rate. Given that this is true, an intuitive explanation of the suboptimality of $\hat{\rho}$ in the present setting might be the following: the construction of $\hat{\rho}$ in \eqref{fn} involves division by $(\hat{\phi}(t))^2,$ which is close to $(\phi_X(t))^2$ on $[-h^{-1},h^{-1}]$ for $n$ large enough. Hence in essence we are dealing with a kernel-type deconvolution density estimator which involves division by $(\phi_X(t))^2,$ whereas in conventional deconvolution problems the kernel estimator involves division by the characteristic function of the measurement error variable and not its square, see e.g.\ \cite{fan1}. By a rough analogy, assuming that the Gaussian component in the L\'evy process plays a role similar to the measurement error in the deconvolution problems, one can see that the variance of our estimator $\hat{\rho}$ of a L\'evy density is larger than the variance of a kernel-type deconvolution density estimator, compare p.\ 1266 in \cite{fan1} and an upper bound on the term $T_2$ in the proof of Theorem \ref{thm-rho2}. In order to render the variance asymptotically negligible, a somewhat larger bandwidth would thus be required in the former case than in the latter case. However, unlike the case when the L\'evy density satisfies Condition \ref{conditionf}, this has a dramatic effect on the bias of the estimator (as far as its order is concerned) for the class of supersmooth L\'evy densities and the suboptimality of $\hat{\rho}$ results: it is the squared bias, or roughly speaking the term $T_1$ in the proof of Theorem \ref{thm-rho2}, that dominates the asymptotics of $\hat{\rho}.$ No such problem seems to arise in \cite{comte}, where unlike our setting it is a priori assumed that $\gamma=0,\sigma=0,$ and as a consequence one can derive a different inversion formula than \eqref{inv}, cf.\ formula \eqref{**} below, which involves only division by $\phi_{X_1}$ and not by its square.

In light of the above observations another natural question that arises in this context is whether one has to use \eqref{ratio} instead of \eqref{inv} as a basis of construction of an estimator of $\rho$: under appropriate conditions with the former formula one can express the L\'evy density $\rho$ as
\begin{equation}
\label{**}
\rho(x)=-\frac{1}{2\pi x} \int_{\mathbb{R}} e^{-itx} \left( i \frac{\phi_{X_1}^{\prime}(t)}{\phi_{X_1}(t)} + \gamma + i \sigma^2 t  \right)dx,
\end{equation}
which involves division by the first power of $\phi_{X_1}$ only. By replacing $\phi_{X_1}$ by the empirical characteristic function $\hat{\phi}$ and $\sigma^2$ and $\gamma$ by their estimators and by application of an appropriate amount of regularisation we would thus get an estimator of $\rho$ that in its form is closer to a conventional kernel-type deconvolution density estimator in that under the integral sign it involves division by the first power of the (estimated) characteristic function only. It is nevertheless unclear whether this approach can lead to an estimator of $\rho$ with a better (optimal in the best case) convergence rate than the one we are considering in the present work: one has to find estimators of $\gamma$ and $\sigma^2$ that converge at an optimal rate in the present context, which does not seem to be an easy task.

Another interesting question that arises in the present context is that of adaptation: construction of our estimator of $\rho$ does rely on knowledge of the smoothness degree of a L\'evy density, see in particular Conditions \ref{conditionv}, \ref{conditionh2} and \ref{conditionv2}. In practice it might happen that this smoothness degree is unknown and it is desirable to have an estimator of $\rho$ that automatically achieves the optimal rate of convergence without knowledge of the smoothness degree of a L\'evy density. We view this as a separate problem and do not address it in the present work. Relevant results are available in the context of pure jump L\'evy processes and we refer e.g.\ to \cite{comte} for additional details. Note that the proofs of the adaptation results in that paper require nontrivial amount of technical work. In any case, in our setting an adaptive estimator and $\sigma^2$ would be required.

We conclude this section by a brief comparison of $\hat{\rho}$ to the
estimator $\rho_n$ of $\rho$ proposed in \cite{gug3}. Up to some additional
truncation, the latter estimator is given by
\begin{equation}
\label{deconvnoise-fnh}
\rho_{n}(x)=\frac{1}{2\pi}\int_{-1/h}^{1/h}
e^{-itx}\operatorname{Log}\left(\frac{\hat{\phi}(t)}{e^{i\hat{\gamma}t}e^{-\hat{\nu}(\mathbb{R})}e^{-\hat{\sigma}^2t^2/2}}\right)dt,
\end{equation}
where $\operatorname{Log}$ denotes the so-called distinguished
logarithm, i.e.\ a `logarithm' that is a continuous and single-valued function of $t,$
see Theorem 7.6.2 of \cite{chung} for its construction. Furthermore,
$\hat{\gamma},$ $\hat{\nu}(\mathbb{R})$ and $\hat{\sigma}^2$ are estimators
of the parameters $\gamma,\nu(\mathbb{R})$ and $\sigma^2,$ respectively. Notice that
in general the distinguished logarithm $\Log(g(t))$ of some function $g$ is not a composition of a fixed branch of
an ordinary logarithm with $g.$ The
estimator $\rho_{n}$ seems to be given by a more complicated
expression than $\hat{\rho},$ because it depends explicitly on
estimators of $\gamma$ and $\nu(\mathbb{R})$ in addition to the
estimator of $\sigma^2.$ The matter is furthermore complicated by the need to use the distinguished logarithm. The latter
in \eqref{deconvnoise-fnh} can be defined only for
those $\omega $'s from the sample space $\Omega$ for which
$\hat{\phi}$ as a function of $t$ does not hit zero on
$[-h^{-1},h^{-1}].$ For those $\omega$'s for which this is not
satisfied, $\rho_{n}$ has to be assigned an arbitrary value, e.g.\
one can assume that $\rho_{n}$ is a standard normal density. It is
shown in \cite{gug3} that as $n\rightarrow\infty,$ the probability of the event that
$\hat{\phi}$ hits zero for $t$ in $[-h^{-1},h^{-1}]$ vanishes under appropriate conditions. However, an almost sure result of a similar
type remains to be unknown (it has been established only in the context of \cite{belomestny} in \cite{sohl}). Also in practice the fact that $\hat{\phi}$ does not vanish can be checked for a discrete grid of points $t$ only and it could happen that one misses the fact that $\hat{\phi}(t)$ is zero for some $t\in[-h^{-1},h^{-1}].$ All this seems to be a disadvantage of the
estimator $\rho_{n}.$ On the other hand the estimator $\hat{\rho}$
is undefined for $x=0$ and a study of its asymptotic properties
requires stronger moment conditions on the L\'evy density $\rho.$ Also, a division by $x^{2}$ in the vicinity of the origin might render it numerically unstable. In conclusion, both estimators are rate-optimal over an appropriate class of L\'evy triplets, but each of them seems to have its own advantages over another.

\section{Proofs}
\label{proofs}

\begin{proof}[Proof of Theorem \ref{thm-ineq}]
The proof is similar in spirit to the one in \cite{gug3}, pp.\ 334--335, which in turn mimicks the one in \cite{matias}, pp.\ 326--327. Since both of the proofs are deficient, here we also seize an opportunity to rectify them.

We have
\begin{equation*}
\ex\left[\left(\sup_{t\in[-h^{-1},h^{-1}]}|\hat{\phi}^{(k)}(t)-\phi_{X_1}^{(k)}(t)|\right)^r\right]=\frac{1}{n^{r/2}}\ex\left[\left(\sup_{t\in[-h^{-1},h^{-1}]}|G_n v_{t,k}|\right)^r\right],
\end{equation*}
where $G_n v_{t,k}$ denotes an empirical process
\begin{equation*}
G_n
v_{t,k}=\frac{1}{\sqrt{n}}\sum_{j=1}^n(v_{t,k}(Z_j)-\operatorname{E}[v_{t,k}(Z_j)])
\end{equation*}
and the function $v_{t,k}$ is defined as $v_{t,k}:x\mapsto (ix)^k e^{itx}.$ Introduce the functions $v_{t,k,1}:x\mapsto x^k\sin(tx)$ and $v_{t,k,2}:x\mapsto x^k\cos(tx).$ Since $|i^k|=1$ and $e^{itx}=\cos(tx)+i\sin(tx),$ the $c_r$-inequality gives
\begin{align*}
\ex\left[\left(\sup_{t\in[-h^{-1},h^{-1}]}|G_n v_{t,k}|\right)^r\right]&\lesssim \ex\left[\left(\sup_{t\in[-h^{-1},h^{-1}]}|G_n v_{t,k,1}|\right)^r\right]\\
&+\ex\left[\left(\sup_{t\in[-h^{-1},h^{-1}]}|G_n v_{t,k,2}|\right)^r\right].
\end{align*}
Furthermore, by differentiability of $v_{t,k,j}$ with respect to $t$ and the mean-value theorem we have
\begin{equation}
\label{envelope}
|v_{t,k,j}(x)-v_{s,k,j}(x)|\leq |x|^{k+1}|t-s|
\end{equation}
for $j=1,2.$ Consequently, for a fixed $x$ the function $v_{t,k,j}$ is Lipschitz in $t$ with a
Lipschitz constant $|x|^{k+1}.$

In what follows we will need some results from the
theory of empirical processes. For all the unexplained terminology
and notation we refer e.g.\ to Section 19.2 of \cite{vdvaart} or
Section 2.1.1 of \cite{vaart}. First of all, by the inequality \eqref{envelope} and by Theorem 2.7.11 of \cite{vaart} the bracketing number $N_{[]}$ of
the class of functions $\mathbb{F}_{n,j}$ (for $j=1,2$ this refers to the collection of functions
$v_{t,k,j}$ for $t\in [-h^{-1},h^{-1}]$) can be bounded by the
covering number $N$ of the interval $I_n=[-h^{-1},h^{-1}]$ as
follows
\begin{equation*}
N_{[]}(2\epsilon\||x|^{k+1}\|_{\mathbb{L}_2(Q)};\mathbb{F}_{n,j};\mathbb{L}_2(Q))\leq N(\epsilon;I_n;|\cdot|).
\end{equation*}
Here $Q$ is any probability measure. Since it is easily seen that for the covering and bracketing numbers of the classes $\mathbb{F}_{n,j},$ $j=1,2,$ we have the inequality
\begin{equation*}
N(\epsilon\||x|^{k+1}\|_{\mathbb{L}_2(Q)};\mathbb{F}_{n,j};\mathbb{L}_2(Q))\leq N_{[]}(2\epsilon\||x|^{k+1}\|_{\mathbb{L}_2(Q)};\mathbb{F}_{n,j};\mathbb{L}_2(Q)),
\end{equation*}
cf.\ p.\ 84 in \cite{vaart}, and since
\begin{equation*}
N(\epsilon;I_n;|\cdot|)\leq \frac{1}{\epsilon}\frac{2}{h}+1,
\end{equation*}
we obtain that
\begin{equation}\
\label{ent1}
N(\epsilon\||x|^{k+1}\|_{\mathbb{L}_2(Q)};\mathbb{F}_{n,j};\mathbb{L}_2(Q))\leq \frac{1}{\epsilon}\frac{2}{h}+1.
\end{equation}
By taking $s=0,$ it follows from the definition of $v_{t,k,j}$ and \eqref{envelope} that the function
$F_{h,1}(x)=|x|^{k+1}h^{-1}$ can be used as an envelope for the
class $\mathbb{F}_{n,1},$ while
$F_{h,2}(x)=|x|^{k+1}h^{-1}+|x|^{k}$ can serve as an envelope for
$\mathbb{F}_{n,2}.$ Next define $J(1,\mathbb{F}_{n,j}),$ the
entropy of the class $\mathbb{F}_{n,j},$ as
\begin{equation*}
J(1,\mathbb{F}_{n,j})=\sup_Q\int_0^1 \{1+\log
(N(\epsilon\|F_{h,j}(x)\|_{\mathbb{L}_2(Q)};\mathbb{F}_{n,j};\mathbb{L}_2(Q)))
\}^{1/2}d\epsilon,
\end{equation*}
where $j=1,2,$ and the supremum is taken over all discrete probability measures
$Q,$ such that $\|F_{h,j}(x)\|_{\mathbb{L}_2(Q)}>0.$ Notice that
$\mathbb{F}_{n,j}$'s are measurable classes of functions with
measurable envelopes. Theorem 2.14.1 in \cite{vaart} then implies that
\begin{equation*}
\ex\left[\left(\sup_{t\in[-h^{-1},h^{-1}]}|G_n
v_{t,k,j}|\right)^r\right]\lesssim
\|F_{h,j}(x)\|^r_{\mathbb{L}_{2\vee r}(\operatorname{P})}(J(1,\mathbb{F}_{n,j}))^r.
\end{equation*}
Here the probability measure $\operatorname{P}$ on the righthand side is associated with the distribution of $X_1.$ We next need to work out the quantities on the righthand side of the above display. Observe that
\begin{equation*}
\|F_{h,1}(x)\|_{\mathbb{L}_{2\vee r}(\operatorname{P})}^r=\frac{1}{h^r} \||x|^{k+1}\|_{\mathbb{L}_{2\vee r}(\operatorname{P})}^r.
\end{equation*}
Moreover, we have
\begin{equation*}
\|F_{h,2}(x)\|_{\mathbb{L}_{2\vee r}(\operatorname{P})}^r\lesssim\frac{1}{h^r} (\||x|^{k+1}\|_{\mathbb{L}_{2\vee r}(\operatorname{P})}^r + \||x|^{k}\|_{\mathbb{L}_{2\vee r}(\operatorname{P})}^r),
\end{equation*}
provided $h\leq 1.$ Here we also used the $c_{2\vee r}$-inequality. It thus remains to bound the entropy $J(1,\mathbb{F}_{n,j}).$ By the fact that
\begin{equation*}
\|F_{h,1}(x)\|_{\mathbb{L}_2(Q)}=h^{-1}\||x|^{k+1}\|_{\mathbb{L}_2(Q)}
\end{equation*}
and by taking $\epsilon / h$ instead of $\epsilon$ in \eqref{ent1} we get
\begin{equation}
\label{n1}
N(\epsilon\|F_{h,1}(x)\|_{\mathbb{L}_2(Q)};\mathbb{F}_{n,j};\mathbb{L}_2(Q))\leq \frac{2}{\epsilon}+1.
\end{equation}
Furthermore, since $\|F_{h,2}(x)\|_{\mathbb{L}_2(Q)}\geq
\||x|^{k+1}h^{-1}\|_{\mathbb{L}_2(Q)},$ by monotonicity of the covering number
$N$ in the size of the covering balls combined with \eqref{n1} we
obtain that
\begin{equation}
\label{n2}
N(\epsilon\|F_{h,2}(x)\|_{\mathbb{L}_2(Q)};\mathbb{F}_{n,j};\mathbb{L}_2(Q))\leq \frac{2}{\epsilon}+1.
\end{equation}
Inserting the bounds from \eqref{n1} and \eqref{n2} into the definition of $J(1,\mathbb{F}_{n,j})$, we see that
\begin{equation*}
J(1,\mathbb{F}_{n,j})\leq \int_0^1\left\{1+\log \left(\frac{2}{\epsilon}+1\right)\right\}^{1/2}d\epsilon<\infty.
\end{equation*}
This yields the statement of the theorem.
\end{proof}

\begin{proof}[Proof of Theorem \ref{thm-rho}]
By the $c_2$-inequality we have
\begin{equation*}
\operatorname{E}[(\hat{\rho}(x)-\rho(x))^2]\lesssim |\rho(x)-\widetilde{\rho}(x)|^2+\operatorname{E}[|\hat{\rho}(x)-\widetilde{\rho}(x)|^2]=T_1+T_2,
\end{equation*}
where
\begin{equation*}
\widetilde{\rho}(x)=\frac{1}{2\pi x^2}\int_{-1/h}^{1/h} e^{-itx}\left(\frac{(\phi_{X_1}^{\prime}(t))^2-\phi_{X_1}^{\prime\prime}(t)\phi_{X_1}(t)}{(\phi_{X_1}(t))^2}-\sigma^2\right)dt.
\end{equation*}
We will first work out the term $T_1.$ By \eqref{ast*} we have
\begin{equation*}
-\phi_{\rho}^{\prime\prime}(t)=\frac{(\phi_{X_1}^{\prime}(t))^2-\phi_{X_1}^{\prime\prime}(t)\phi_{X_1}(t)}{(\phi_{X_1}(t))^2}-\sigma^2.
\end{equation*}
Then by the Fourier inversion argument we can write
\begin{equation*}
\rho(x)-\widetilde{\rho}(x)=\frac{1}{2\pi}\int_{\mathbb{R}} e^{-itx} \phi_{\rho}(t)dt + \frac{1}{2\pi x^2} \int_{-1/h}^{1/h} e^{-itx} \phi_{\rho}^{\prime\prime}(t)dt.
\end{equation*}
Integrating by parts twice the second term on the righthand side of the above display and using Condition \ref{conditionf}, we obtain
\begin{equation*}
\frac{1}{2\pi x^2} \int_{-1/h}^{1/h} e^{-itx} \phi_{\rho}^{\prime\prime}(t)dt = - \frac{1}{2\pi} \int_{-1/h}^{1/h} e^{-itx} \phi_{\rho}(x)dx + O( h^{\beta} ),
\end{equation*}
where the $O(h^{\beta})$ term on the righthand side is uniform in $\rho.$ With this in mind and by the fact that $\phi_{\rho}(t)=\nu(\mathbb{R}) \phi_f(t),$ we can bound $T_1$ using the $c_2$-inequality as
\begin{align*}
T_1 & \lesssim \frac{\Lambda^2}{4\pi^2}\left( \int_{\mathbb{R}\setminus[-h^{-1},h^{-1}]} |\phi_f(t)|dt\right)^2 + h^{2\beta}\\
& \lesssim \left( \int_{\mathbb{R}\setminus[-h^{-1},h^{-1}]} |t|^{\beta}|t|^{-\beta}|\phi_f(t)|dt\right)^2  + h^{2\beta}\\
& \leq \left( \int_{-\infty}^{\infty} |t|^{\beta}|\phi_f(t)|dt\right)^2 h^{2\beta}  + h^{2\beta}\\
& \lesssim h^{2\beta},
\end{align*}
provided that $h\leq 1.$ Hence by Condition \ref{conditionsigma} the term
$\suppp T_1$ is of order $(\log n)^{-\beta}.$ This is the term that has the dominating contribution to the risk of $\hat{\rho}.$ The rest of the proof is dedicated to showing that $T_2$ is negligible in comparison to $T_1.$ This involves a long series of inequalities.

By the $c_2$-inequality we have
\begin{align*}
T_2 & \lesssim \frac{1}{4\pi^2x^4} \left| \int_{-1/h}^{1/h}e^{-itx} dt \right|^2 \ex[|\hat{\sigma}^2-\sigma^2|^2]\\
& + \frac{1}{4\pi^2x^4}\ex\left[ \left| \int_{-1/h}^{1/h} e^{-itx} (\Phi(\hat{\phi}(t))1_{G_t}  - \Phi({\phi}(t)) )dt \right|^2    \right]\\
&=T_3+T_4,
\end{align*}
where for a twice differentiable function $\zeta$ the mapping $\Phi$ is defined by
\begin{equation*}
\Phi(\zeta(t))=\frac{(\zeta^{\prime}(t))^2-\zeta^{\prime\prime}(t)\zeta(t)}{(\zeta(t))^2}.
\end{equation*}
By Theorem 2.1 in combination with Condition \ref{conditionh} we
have $\suppp T_3\lesssim (\log n)^{-\beta-2}.$ Next notice that
\begin{align*}
T_4 & \leq \frac{1}{\pi^2x^4 h^2}\ex\left[ \left(  \sup_{t\in[-h^{-1},h^{-1}]} |\Phi(\hat{\phi}(t))1_{G_t}  - \Phi({\phi}(t))|  \right)^2 \right]\\
& = \frac{T_5}{\pi^2x^4}.
\end{align*}
Hence it remains to study $T_5.$ This will be done via repeated applications of Theorem \ref{thm-ineq}. First of all, the $c_2$-inequality gives
\begin{align*}
T_5 & \lesssim \frac{1}{h^2} \ex\left[ \left( \supp\left|  \frac{\hat{\phi}^{\prime\prime}(t)}{\hat{\phi}(t)}1_{G_t} - \frac{{\phi}^{\prime\prime}_{X_1}(t)}{\phi_{X_1}(t)} \right|       \right)^2                 \right]\\
&+\frac{1}{h^2}\ex\left[ \left( \supp\left|  \frac{(\hat{\phi}^{\prime}(t))^2}{(\hat{\phi}(t))^2}1_{G_t} - \frac{({\phi}^{\prime}_{X_1}(t))^2}{(\phi_{X_1}(t))^2} \right|       \right)^2                 \right]\\
&=T_6+T_7.
\end{align*}
By another application of the $c_2$-inequality we obtain
\begin{align*}
T_6 & \lesssim \frac{1}{h^2}\ex\left[ \left( \supp\left|  \frac{\hat{\phi}^{\prime\prime}(t)}{\hat{\phi}(t)}1_{G_t} - \frac{{\phi}^{\prime\prime}_{X_1}(t)}{\phi_{X_1}(t)}1_{G_t} \right|       \right)^2                 \right]\\
&+\frac{1}{h^2}\ex\left[ \left(\supp \left(\left| \frac{\phi_{X_1}^{\prime\prime}(t)}{\phi_{X_1}(t)} \right| 1_{G_t^c}\right)\right)^2 \right]\\
&=T_8+T_9.
\end{align*}
The term $T_8$ in the last equality can be bounded as follows,
\begin{align*}
T_8 & \lesssim \frac{1}{h^2}\ex\left[ \left( \supp\left|  \frac{\hat{\phi}^{\prime\prime}(t)}{\hat{\phi}(t)}1_{G_t} - \frac{\hat{\phi}^{\prime\prime}(t)}{\phi_{X_1}(t)}1_{G_t} \right|       \right)^2                 \right]\\
& + \frac{1}{h^2}\ex\left[ \left( \supp\left|  \frac{\hat{\phi}^{\prime\prime}(t)}{{\phi}_{X_1}(t)}1_{G_t} - \frac{{\phi}^{\prime\prime}_{X_1}(t)}{\phi_{X_1}(t)}1_{G_t} \right|       \right)^2                 \right]\\
& = T_{10}+T_{11}.
\end{align*}
Further bounding gives
\begin{align*}
T_{10} & \leq \frac{1}{h^2}\ex\left[\left( \supp
|\hat{\phi}^{\prime\prime}(t)| \right)^2 \left( \supp
\left(\frac{|\hat{\phi}(t)-{\phi}_{X_1}(t)|}{|\hat{\phi}(t)||{\phi}_{X_1}(t)|}1_{G_t}\right)
\right)^2     \right].
\end{align*}
Now apply the Cauchy-Schwarz inequality to the righthand side to obtain
\begin{align*}
T_{10} & \leq \frac{1}{h^2}\left(\ex\left[\left( \supp |\hat{\phi}^{\prime\prime}(t)| \right)^4\right]\right)^{1/2}\\
& \times
\left(\ex\left[\left( \supp \left(\frac{|\hat{\phi}(t)-{\phi}_{X_1}(t)|}{|\hat{\phi}(t)||{\phi}_{X_1}(t)|}1_{G_t}\right)     \right)^4     \right]\right)^{1/2}\\
&=\frac{1}{h^2}\sqrt{T_{12}}\sqrt{T_{13}}.
\end{align*}
Observe that by the fact that $|\hat{\phi}^{\prime\prime}(t)|\leq n^{-1}\sum_{j=1}^n Z_j^2$ and by the $c_4$-inequality
\begin{align*}
T_{12} & \leq \ex\left[ \left| \frac{1}{n}\sum_{j=1}^n Z_j^2 \right|^4 \right]\\
& \leq \frac{c_4}{n^4}\ex\left[ \left|\sum_{j=1}^n (Z_j^2-\ex[Z_j^2])\right|^4 \right]+c_4 (\ex[Z_1^2])^4\\
& \leq (3\sqrt{2})^4 4^{4/2} \frac{c_4}{n^2}\ex[(Z_1^2-\ex[Z_1^2])^4]+c_4(\ex[Z_1^2])^4,
\end{align*}
where the last inequality follows from the Marcinkiewicz-Zygmund
inequality as given in Theorem 2 of \cite{ren}. By the Lyapunov
inequality $(\ex[Z_1^2])^4\leq \ex[Z_1^8].$ This in combination
with the $c_4$-inequality gives $\ex[(Z_1^2-\ex[Z_1^2])^4]\lesssim
\ex[Z_1^8].$ It remains to bound $\ex[Z_1^8]$ uniformly in L\'evy
triplets. The most direct way of doing this is to notice that
\begin{equation*}
\ex[Z_1^8]=\ex[(\gamma+\sigma W + Y)^8]\lesssim \Gamma^8+\Sigma^8 \ex[W^8]+\ex[Y^8],
\end{equation*}
where $W$ is a standard normal random variable, while $Y$ has a
compound Poisson distribution with intensity $\nu(\mathbb{R})$ and
jump size density $f.$ Observe that $\ex[Y^8]=\phi_Y^{(8)}(0)$
and that under Condition \ref{conditionf} and with the Lyapunov inequality it is laborious, though straightforward to show that $\phi_Y^{(8)}(0)$ is bounded  by a universal
constant uniformly in L\'evy triplets. Hence the term $\suppp \ex[Z_1^8]$ is also bounded and then so is
$\suppp \sqrt{T_{12}}.$ As far as $T_{13}$ is concerned, we have
\begin{equation*}
T_{13}\lesssim \frac{e^{4\Sigma^2/h^2}}{\kappa_n^4}\ex\left[ \left( \supp |\hat{\phi}(t)-\phi_{X_1}(t)| \right)^4 \right],
\end{equation*}
which follows from Conditions \ref{conditionf} and \ref{conditionsigma}. Inequality \eqref{ineq-1} with $k=0$ and $r=4$ then yields
\begin{equation*}
T_{13}\lesssim
\||x|\|_{\mathbb{L}_{4}(\operatorname{P})}^4\frac{e^{4\Sigma^2/h^2}}{\kappa_n^4 h^4n^{2}}.
\end{equation*}
Since $\||x|\|_{\mathbb{L}_{4}(\operatorname{P})}$ is
bounded by a constant uniformly in L\'evy triplets (this can be proved by
essentially the same argument as we used for $\suppp \ex[Z_1^8]$
above), it follows that $\suppp T_{13}$ is negligible in
comparison to $(\log n)^{-\beta}.$ This is also true for $h^{-2}\suppp \sqrt{T_{13}}$ and then also for $\suppp T_{10}.$ To complete the study of $T_8,$ we
need to study $T_{11}.$ The latter can be bounded as follows:
\begin{equation*}
T_{11}\lesssim \frac{e^{\Sigma^2/h^2}}{h^2}\ex\left[ \left( \supp
|\hat{\phi}^{\prime\prime}(t)-\phi_{X_1}^{\prime\prime}(t)|
\right)^2 \right].
\end{equation*}
By the same reasoning as above one can show that $\suppp T_{11}$ is negligible
compared to $(\log n)^{-\beta}.$ Consequently, so is $\suppp T_8.$
Next we deal with $T_9.$ Notice that by our conditions and the Lyapunov inequality
\begin{align*}
\left|\frac{\phi_{X_1}^{\prime\prime}(t)}{\phi_{X_1}(t)}\right| & \leq \left|\frac{\phi_{X_1}^{\prime}(t)}{\phi_{X_1}(t)}\right|^2+\sigma^2+\infint x^2\rho(x)dx\\
& \leq \left(\Gamma+\Sigma^2\frac{1}{h}+\Lambda K^{1/12}\right)^2+\Sigma^2+\Lambda K^{1/6}\\
& \lesssim \frac{1}{h^2}.
\end{align*}
Hence it holds that
\begin{equation}
\label{sder}
\suppp \supp\left|\frac{\phi_{X_1}^{\prime\prime}(t)}{\phi_{X_1}(t)}\right|\lesssim \frac{1}{h^2}.
\end{equation}
Consequently, we have
\begin{equation*}
T_9\lesssim \frac{1}{h^4}\ex\left[ \left(\supp 1_{G_t^c} \right)^2 \right].
\end{equation*}
We study the expectation on the righthand side. First of all, for $t\in[-h^{-1},h^{-1}]$ and all $n$ large enough we have
\begin{align*}
G_t^c & = \left\{ |\hat{\phi}(t)|-|\phi_{X_1}(t)|<\kappa_n e^{-\Sigma^2/(2h^2)}-|\phi_{X_1}(t)| \right\}\\
& = \left\{ |\phi_{X_1}(t)|-|\hat{\phi}(t)| > |\phi_{X_1}(t)| - \kappa_n e^{-\Sigma^2/(2h^2)} \right\}\\
& \subseteq \left\{ |\phi_{X_1}(t)-\hat{\phi}(t)| > (e^{-2\Lambda} - \kappa_n) e^{-\Sigma^2/(2h^2)} \right\}\\
& \subseteq \left\{ \supp |\phi_{X_1}(t)-\hat{\phi}(t)| > (e^{-2\Lambda} - \kappa_n) e^{-\Sigma^2/(2h^2)} \right\}\\
& = G^*.
\end{align*}
Therefore $\sup_{t\in[-h^{-1},h^{-1}]}1_{G_t^c}\leq 1_{G^*}$ and then by Chebyshev's inequality we obtain
\begin{equation}
\label{T9bound}
T_9 \lesssim \frac{1}{h^4} \operatorname{P}(G^*) \lesssim
\frac{e^{2\Sigma^2/h^2}}{h^4} \ex\left[\left( \supp
|\phi_{X_1}(t)-\hat{\phi}(t)|\right)^4 \right].
\end{equation}
Next apply \eqref{ineq-1} with $k=0$ and $r=4$ to the
expectation in the rightmost inequality to conclude that
$\sup_{\mathcal{T}}T_9$ is negligible in comparison to $(\log
n)^{-\beta}.$ This shows that also $\suppp{T_6}$ is negligible in
comparison to $(\log n)^{-\beta}.$ To complete bounding $T_5$ and
eventually $T_4,$ we need to bound $T_7.$ By the $c_2$-inequality
\begin{align*}
T_7 & \lesssim \ex\left[ \left( \supp \left( \left| \frac{({\phi}^{\prime}_{X_1}(t))^2}{(\phi_{X_1}(t))^2}\right| 1_{G_t^c} \right)       \right)^2                 \right]\\
& + \ex\left[ \left( \supp\left|  \frac{(\hat{\phi}^{\prime}(t))^2}{(\hat{\phi}(t))^2}1_{G_t} - \frac{({\phi}^{\prime}_{X_1}(t))^2}{(\phi_{X_1}(t))^2}1_{G_t} \right|       \right)^2                 \right]\\
& = T_{14}+T_{15}.
\end{align*}
Observe that for $h\rightarrow 0$ we have
\begin{equation*}
\suppp \sup_{t\in[-h^{-1},h^{-1}]} \left| \frac{({\phi}^{\prime}_{X_1}(t))^2}{(\phi_{X_1}(t))^2}\right| \lesssim \frac{1}{h^2},
\end{equation*}
which can be shown by the same arguments that led to \eqref{sder}. We also have $T_{14} \leq h^{-4} \operatorname{P}(G^*)$ by the above display. It then follows from \eqref{T9bound} that $\suppp T_{14}$ is negligible in comparison to $(\log n)^{-\beta}.$ We turn to $T_{15}.$ By the $c_2$-inequality
\begin{align*}
T_{15} & \lesssim \ex\left[ \left( \supp\left|  \frac{(\hat{\phi}^{\prime}(t))^2}{(\hat{\phi}(t))^2}1_{G_t} - \frac{(\hat{\phi}^{\prime}(t))^2}{(\phi_{X_1}(t))^2}1_{G_t} \right|       \right)^2                 \right]\\
& + \ex\left[ \left( \supp\left|  \frac{(\hat{\phi}^{\prime}(t))^2}{({\phi}_{X_1}(t))^2}1_{G_t} - \frac{({\phi}^{\prime}_{X_1}(t))^2}{(\phi_{X_1}(t))^2}1_{G_t} \right|       \right)^2                 \right]\\
& = T_{16}+T_{17}.
\end{align*}
Notice that by the Cauchy-Schwarz inequality
\begin{align*}
T_{16} & \leq \ex\left[ \left( \supp|  {(\hat{\phi}^{\prime}(t))^2}|\supp\left(\left|\frac{1}{(\hat{\phi}(t))^2} - \frac{1}{(\phi_{X_1}(t))^2}\right|1_{G_t}\right) \right)^2 \right]\\
& = \ex\left[ \left( \supp|  {(\hat{\phi}^{\prime}(t))^2}|\supp\left(\frac
{|(\phi_{X_1}(t))^2-(\hat{\phi}(t))^2|}{|\hat{\phi}(t)|^2|\phi_{X_1}(t)|^2}
1_{G_t}\right) \right)^2 \right]\\
& \leq \left(
\ex\left[ \left( \supp|  {(\hat{\phi}^{\prime}(t))^2}|\right)^4 \right] \right)^{1/2}\\
& \times
\left(\ex\left[ \left( \supp\left(\frac
{|(\phi_{X_1}(t))^2-(\hat{\phi}(t))^2|}{|\hat{\phi}(t)|^2|\phi_{X_1}(t)|^2}1_{G_t}\right) \right)^4 \right]\right)^{1/2}\\
& =\sqrt{T_{18}}\sqrt{T_{19}}.
\end{align*}
Since $|\hat{\phi}^{\prime}(t)|\leq
n^{-1}\sum_{j=1}^n |Z_j|,$ it follows that the term $T_{18}$ is bounded by
$\ex[(n^{-1}\sum_{j=1}^n |Z_j|)^8].$ By the $c_8$-inequality we
then get
\begin{equation*}
\ex\left[\left(\frac{1}{n}\sum_{j=1}^n |Z_j|\right)^8\right] \lesssim \frac{1}{n^8}\ex\left[\left|\sum_{j=1}^n (|Z_j| - \ex[|Z_j|])\right|^8\right] + (\ex[|Z_j|])^8.
\end{equation*}
Hence $\suppp T_{18}$ is bounded by a constant, which can be proved by
the same argument as we used for $\suppp T_{12}.$ Finally, we consider
$T_{19}.$ We have
\begin{equation*}
T_{19}  \lesssim \frac{e^{4\Sigma^2/h^2}}{k_n^8} \ex\left[ \left(
\supp |\hat{\phi}(t)-\phi_{X_1}(t)| \right)^4 \right],
\end{equation*}
because
\begin{equation*}
|(\phi_{X_1}(t))^2-(\hat{\phi}(t))^2| \leq 2 |\phi_{X_1}(t)-\hat{\phi}(t)|,
\end{equation*}
because $|\phi_{X_1}(t)|$ is bounded from below by
$e^{-2\Lambda-\Sigma^2/(2h^2)}$ for $t\in[-h^{-1},h^{-1}],$ and because of the definition of $G_t.$ Using
\eqref{ineq-1}, we conclude that $\suppp T_{19}$ is negligible in
comparison to $(\log n)^{-\beta}.$ Hence so is $\suppp T_{16}.$ It
remains to study $T_{17}.$ Since
\begin{equation*}
T_{17}\lesssim e^{2\Sigma^2/h^2}\ex\left[ \left( \supp |\hat{\phi}^{\prime}(t)-\phi_{X_1}^{\prime}(t)| \right)^2 \right],
\end{equation*}
it follows from \eqref{ineq-1} and Condition \ref{conditionh} that $\suppp T_{17}$ is negligible
in comparison to $(\log n)^{-\beta}.$ Consequently, so are $\suppp
T_{15}$ and $\suppp T_7.$ Combination of all the above results
completes the proof of the theorem.
\end{proof}

\begin{proof}[Proof of Theorem \ref{lowbound-thm} ]
The statement of the theorem is for estimators based on
observations $X_1,\ldots,X_n,$ but the relationship
$Z_j=X_j-X_{j-1}$ and the stationary independent increments
property of a L\'evy process allows us to work with
$Z_1,\ldots,Z_n$ instead. We adapt the proof of Theorem 4.1 in
\cite{gug3} to the present case. A general idea of the proof is as
follows: we will consider two L\'evy triplets
$T_1=(0,\sigma^2,\rho_1)$ and $T_2=(0,\sigma^2,\rho_2)$ depending
on $n$ and such that the L\'evy densities $\rho_1$ and $\rho_2$
are separated as much as possible at a point $x,$ while at the same time the
corresponding product densities $q_1^{\otimes n}$ and
$q_2^{\otimes n}$ of observations $Z_1,\ldots,Z_n$ are close in
the $\chi^2$-divergence and hence cannot be distinguished well
using the observations $Z_1,\ldots,Z_n.$ Up to a constant, the squared distance
between $\rho_1(x)$ and $\rho_2(x)$ will then
give the desired lower bound \eqref{lowerbound} for estimation of
a L\'evy density $\rho$ at a fixed point $x.$ This is a standard
technique and we refer to Chapter 2 of \cite{tsyb} for a good
exposition of methods for deriving lower bounds in nonparametric
curve estimation.

Consider two L\'evy triplets $T_1=(0,\sigma^2,\rho_1)$ and
$T_2=(0,\sigma^2,\rho_2),$ where $\rho_j(u)=\nu(\mathbb{R})
f_j(u)$ for $j=1,2$ and constants $0<\nu(\mathbb{R})<\Lambda$ and $0<\sigma^2<\Sigma^2.$
Let
\begin{equation*}
f_1(u)=\frac{1}{2}(r_1(u)+r_2(u)),
\end{equation*}
where two densities $r_1$ and $r_2$ are defined through their characteristic functions as follows:
\begin{gather*}
r_1(u)=\frac{1}{2\pi}\infint
e^{-itu}\frac{1}{(1+t^2/\beta_1^2)^{(\beta_2+1)/2}}dt,\\
r_2(u)=\frac{1}{2\pi}\infint
e^{-itu}e^{-\alpha_1|t|^{\alpha_2}}dt.
\end{gather*}
With a proper selection of $\beta_1,\beta_2,\alpha_1$ and
$\alpha_2$ one can achieve that $f_1$ satisfies \eqref{fcnd} with
constants $L/2,$ $L^{\prime}/2$ and $L^{\prime\prime}/2$ instead of $L,$ $L^{\prime}$ and $L^{\prime\prime}.$ We also assume that
$1<\alpha_2<2.$ Next define $f_2$ by
\begin{equation*}
f_2(u)=f_1(u)+\delta_n^{\beta}H((u-x)/\delta_n),
\end{equation*}
where $\delta_n\rightarrow 0$ as $n\rightarrow\infty,$ and the
function $H$ satisfies the following conditions:
\begin{enumerate}
\item $H(0)>0;$
\item $\phi_H(t)$ is twice continuously differentiable;
\item $\infint |t|^{\beta}|\phi_H(t)|dt\leq L/2, \quad |\phi_H(t)|\leq {L^{\prime}}/{(2|t|^{\beta})}, \quad |\phi_H^{\prime}(t)| \leq {L^{\prime\prime}}/{(2|t|^{\beta})};$
\item $\infint H(x)dx=0;$
\item $\int_{-\infty}^0 H(x)dx\neq 0;$
\item $\phi_H(t)=0$ for $t$ outside $[1,2].$
\end{enumerate}
Since $f_1(u)$ decays as $r_2(u)$ at infinity, and consequently as $|u|^{-1-\alpha_2},$
see formula (14.37) in \cite{sato}, with a proper selection of $H,$ e.g.\ by the reasoning similar to the one on p.\ 1268 in \cite{fan1}, the function $f_2$ will be nonnegative, at least for all small enough $\delta_n.$ Consequently, $f_2$ will be a probability density and one can also achieve that it satisfies \eqref{fcnd} for all small enough $\delta_n.$

Now notice that
\begin{equation}
\label{distance} |\rho_2(x)-\rho_1(x)|^2\asymp \delta_n^{2\beta}.
\end{equation}
The statement of the theorem will follow from \eqref{distance} and Lemma 8 of \cite{tsyb1}, if we
prove that for $\delta_n\asymp (\log n)^{-1/2}$ we have
\begin{equation}
\label{chi-square} n\chi^2(q_2,q_1)=n\infint
\frac{(q_2(u)-q_1(u))^2}{q_1(u)}du\leq c,
\end{equation}
where a positive constant $c<1$ is independent of $n.$ Here $\chi^2(\cdot,\cdot)$ denotes the $\chi^2$-divergence, see p.\ 86 in \cite{tsyb} for
the definition.

Denote by $p_i$ a density of a Poisson sum
$Y=\sum_{j=1}^{N(\nu(\mathbb{R}))}W_j$ conditional on the fact
that its number of summands $N(\nu(\mathbb{R}))>0.$ Here $W_j$ are
i.i.d.\ with $W_1\sim f_i.$ Now rewrite the characteristic
function of $Y$ as
\begin{equation}
\label{ychf}
\phi_Y(t)=e^{-\nu(\mathbb{R})}+(1-e^{-\nu(\mathbb{R})})\frac{1}{e^{\nu(\mathbb{R})}-1}\left(e^{\nu(\mathbb{R})\phi_{f_i}(t)}-1\right),
\end{equation}
to see that
\begin{equation*}
\phi_{p_i}(t)=\frac{1}{e^{\nu(\mathbb{R})}-1}\left(e^{\nu(\mathbb{R})\phi_{f_i}(t)}-1\right).
\end{equation*}
Furthermore,
\begin{equation}\label{g-expr}
p_i(u)=\sum_{n=1}^{\infty}f_i^{\ast
n}(u)P(N(\nu(\mathbb{R}))=n|N(\nu(\mathbb{R}))>0).
\end{equation}
By convolving the law of $Y$ with a normal density $\phi_{0,\sigma^2}$ with mean zero and variance $\sigma^2$ and using \eqref{ychf}, we obtain that
\begin{equation*}
q_1(u)\geq (1-e^{-\nu(\mathbb{R})})\phi_{0,\sigma^2}\ast p_1(u).
\end{equation*}
Since by Lemma 2 of \cite{matias} there exists a large enough constant
$A,$ such that the right-hand side of the above display is not
less than $(1-e^{-\nu(\mathbb{R})})p_1(|u|+A),$ we have
\begin{equation*}
n\chi^2(q_2,q_1)\lesssim n\infint
\frac{(q_2(u)-q_1(u))^2}{p_1(|u|+A)}dx\lesssim n\infint
\frac{(q_2(u)-q_1(u))^2}{f_1(|u|+A)}dx.
\end{equation*}
The last inequality is true because by \eqref{g-expr} it holds that $p_1(|u|+A)\gtrsim f_1(|u|+A).$
Splitting the integration region in the rightmost term of the last
display into two parts, we get that
\begin{align*}
n\chi^2(q_2,q_1)&\lesssim n\int_{|u|\leq A} {(q_2(u)-q_1(u))^2}du+n\int_{|u|>A} u^4{(q_2(u)-q_1(u))^2}dx\\
&=T_1+T_2.
\end{align*}
Here we used the facts that $f_1(u)$ decays as $|u|^{-1-\alpha_2}$
at infinity and that
$1<\alpha_2<2.$ Parseval's identity then gives
\begin{align*}
T_1&\leq n\frac{1}{2\pi}\infint
|\phi_{q_2}(t)-\phi_{q_1}(t)|^2dt\\
&=n\frac{(1-e^{-\nu(\mathbb{R})})^2}{2\pi}\infint
|\phi_{p_2}(t)-\phi_{p_1}(t)|^2e^{-\sigma^2t^2}dt\\
&=n\frac{(1-e^{-\nu(\mathbb{R})})^2}{(e^{\nu(\mathbb{R})}-1)^2}\frac{1}{2\pi}\infint
|e^{\nu(\mathbb{R})\phi_{f_2}(t)}-e^{\nu(\mathbb{R})\phi_{f_1}(t)}|^2e^{-\sigma^2t^2}dt\\
& \lesssim n\infint
|\phi_{f_2}(t)-\phi_{f_1}(t)|^2e^{-\sigma^2t^2}dt,
\end{align*}
where the last inequality is a consequence of the mean-value theorem
applied to the function $e^x$ and the fact that
$|\nu(\mathbb{R})\phi_{f_i}(t)|\leq\Lambda<\infty.$  Now notice that
\begin{equation*}
\infint
e^{itu}\delta_n^{\beta}H((u-x)/\delta_n)dx=\delta_n^{\beta+1}e^{itx}\phi_H(\delta_n
t).
\end{equation*}
By definition of $f_1$ and
$f_2$ it follows that
\begin{align*}
T_1&\lesssim n\delta_n^{2\beta+2}\infint |\phi_H(\delta_n
t)|^2e^{-\sigma^2t^2}dt\\
&=n\delta_n^{2\beta+1}\infint
|\phi_H(s)|^2e^{-\sigma^2s^2/\delta_n^2}ds\\
&=O\left(n\delta_n^{2\beta+1}e^{-\sigma^2/\delta_n^2}\right).
\end{align*}
Hence a choice $\delta_n\asymp (\log n)^{-1/2}$ with an appropriate constant will imply that $T_1\rightarrow 0$ as
$n\rightarrow\infty.$

To complete the proof, we need to show that $T_2\rightarrow 0$
under a suitable condition on $\delta_n.$ To this end first notice that even though $\phi_{f_1}$ and $\phi_{f_2}$ are not
twice differentiable at zero, the difference
$\phi_{q_2}(t)-\phi_{q_1}(t)$ still is, because $\phi_H$ is
identically zero outside the interval $[1,2],$ and hence $\phi_{q_2}(t)-\phi_{q_1}(t)$ is zero for $t$ in a neighbourhood of zero. Then by Parseval's identity we obtain that
\begin{equation*}
T_2\leq n\frac{1}{2\pi}\infint
|(\phi_{q_2}(t)-\phi_{q_1}(t))''|^2dt.
\end{equation*}
By the same arguments as we used for $T_1,$ one can show that $T_2\rightarrow
0$ as $n\rightarrow\infty,$ provided $\delta_n\asymp (\log
n)^{-1/2}$ with an appropriate constant. This entails the
statement of the theorem.
\end{proof}

The following technical lemma is used in the proof of Theorem \ref{thmsigmatilde2}.

\begin{lem}
\label{lemmaa1}
Let the sets $B_n$ and $B_n^c$ be defined as
\begin{equation}
\label{bnset}
\begin{split}
B_{n}&=\left\{\sup_{t\in[-\sqrt{2}h^{-1},\sqrt{2}h^{-1}]}\left|{\hat{\phi}(t)}-{\phi_{X_1}(t)}\right|>\delta\right\},\\
B_{n}^c&=\left\{\sup_{t\in[-\sqrt{2}h^{-1},\sqrt{2}h^{-1}]}\left|{\hat{\phi}(t)}-{\phi_{X_1}(t)}\right|\leq\delta\right\},
\end{split}
\end{equation}
where $\delta=(1/4)e^{-2\Lambda-\Sigma^2/h^2}.$
Suppose that $\nu(\mathbb{R})\leq\Lambda<\infty$ and that Conditions \ref{conditionsigma}, \ref{conditiongamma}, \ref{conditionm} and \ref{conditionh2} hold.
Then there exists a universal $n_0$ not depending on the L\'evy triplet $(\gamma,\sigma,\rho),$ such that for all $n\geq n_0$ on the set $B_n^c$ we have
\begin{equation*}
\max\{ \min\{ M_n,\log(|\hat{\phi}(t)|)  \},-M_n \}=\log(|\hat{\phi}(t)|)
\end{equation*}
for $t$ restricted to the interval $[-\sqrt{2}h^{-1},\sqrt{2}h^{-1}].$
\end{lem}
\begin{proof}
The proof is similar to the proof of Lemma 5.1 in \cite{gug3}. On the set $B_n^c$ and for $t$ restricted to the interval $[-\sqrt{2}h^{-1},\sqrt{2}h^{-1}]$ we have
\begin{equation}
\label{logineqhalf}
\left|\left|\frac{\hat{\phi}(t)}{\phi_{X_1}(t)}\right|-1\right|\leq\left|\frac{\hat{\phi}(t)}{\phi_{X_1}(t)}-1\right|<\frac{1}{2}.
\end{equation}
Furthermore, on the same set and for $t\in[-\sqrt{2}h^{-1},\sqrt{2}h^{-1}]$ the inequality
\begin{align*}
|\log(|\hat{\phi}(t))||&\leq |\log(|\phi_{X_1}(t)|)|+\left|\log\left(\left|\frac{\hat{\phi}(t)}{\phi_{X_1}(t)}\right|\right)\right|\\
&\leq |\log(|\phi_{X_1}(t)|)|+\left|\frac{\hat{\phi}(t)}{\phi_{X_1}(t)}-1\right|+\left|\frac{\hat{\phi}(t)}{\phi_{X_1}(t)}-1\right|^2\\
&\leq |\log(|\phi_{X_1}(t)|)|+\frac{3}{4}\\
&\leq 2\Lambda+\frac{\Sigma^2}{h^2}+\frac{3}{4}
\end{align*}
holds. Here in the second line we used an elementary inequality
$|\log(1+z)-z|\leq |z|^2$ valid for $|z|<1/2,$ the third line follows from \eqref{logineqhalf},
while in the last line we used the bound
\begin{equation*}
|\log|\phi_X(t)||\leq
2\Lambda+{\Sigma^2}/{h^2}
\end{equation*}
which holds for $t\in[-\sqrt{2}h^{-1},\sqrt{2}h^{-1}].$
The result is now immediate from Conditions \ref{conditionh} and \ref{conditionm}, because on the set $B_n^c$ an upper bound on $|\log(|\hat{\phi}(t)|)|$ grows slower than $M_n.$
\end{proof}

\begin{proof}[Proof of Theorem \ref{thmsigmatilde2}]
A general line of the proof is similar to that of Theorem 2.1 in \cite{gug}, although the details and actual computations are different. We have
\begin{equation*}
\ex[(\hat{\sigma}_n^2-\sigma^2)^2]=\ex[(\hat{\sigma}_n^2-\sigma^2)^2 1_{B_n}]+\ex[(\hat{\sigma}_n^2-\sigma^2)^2 1_{B_n^c}]
=S_1+S_2,
\end{equation*}
where the two sets $B_n$ and $B_n^c$ are defined in \eqref{bnset} and $\delta$ in their definition is given by $\delta=(1/4)e^{-2\Lambda-\Sigma^2/h^2}.$ The term $S_1$ in the above display can be bounded as follows,
\begin{align*}
S_1&\lesssim \left(M_n^2\left(\int_{\mathbb{R}}|v_h(t)|dt\right)^2+\Sigma^4\right)\operatorname{P}(B_n)\\
&\lesssim \left(M_n^2\left(\int_{\mathbb{R}}|v_h(t)|dt\right)^2+\Sigma^4\right)\frac{e^{2\Sigma^2/h^2}}{nh^2}\\
&=\left(M_n^2h^4\left(\int_{\mathbb{R}}|v(t)|dt\right)^2+\Sigma^4\right)\frac{e^{2\Sigma^2/h^2}}{nh^2}\\
&\lesssim m_n^2 \frac{e^{2\Sigma^2/h^2}}{nh^2},
\end{align*}
where we used Chebyshev's inequality and Theorem \ref{thm-ineq} with $r=2$ to see the second line. Next we consider $S_2.$ By Lemma \ref{lemmaa1} on the set $B_n^c$ for all large enough $n$ truncation in the definition of
$\hat{\sigma}_n^2$ becomes unimportant and we have
\begin{align*}
S_2&=\ex\left[\left(\int_{\mathbb{R}}\log(|\hat{\phi}(t)|)v_h(t)dt-\sigma^2\right)^21_{B_n^c}\right]\\
&=\ex\left[\left(\int_{\mathbb{R}}\log\left(\left|\frac{\hat{\phi}(t)}{\phi_{X_1}(t)}\right|\right)v_h(t)dt+\int_{\mathbb{R}}\log(|\phi_{X_1}(t)|)v_h(t)dt-\sigma^2\right)^21_{B_n^c}\right].
\end{align*}
Hence by equation (4) in \cite{gug3}, the $c_2$-inequality and Conditions \ref{conditionf2} and \ref{conditionv2} we obtain that
\begin{align*}
S_2 & \lesssim \Lambda^2\left(\int_{\mathbb{R}}\Re(\phi_f(t))v_h(t)dt\right)^2\\
& + \ex\left[\left(\int_{\mathbb{R}}\log\left(\left|\frac{\hat{\phi}(t)}{\phi_X(t)}\right|\right)v_h(t)dt\right)^21_{B_n^c}\right]\\
& = S_3+S_4.
\end{align*}
To bound $S_3,$ we proceed as follows,
\begin{align*}
S_3 & \lesssim h^6 \int_{\mathbb{R}} |\phi_f(t)|^2e^{2\alpha |t|^s}dt \int_{\mathbb{R}\backslash [-h^{-1},h^{-1}]} e^{-2\alpha |t|^{s}}dt\\
&\lesssim h^6 \int_{1/h}^{\infty} e^{-2\alpha t^s}dt\\
&\lesssim h^{s+5} e^{-2\alpha/h^s},
\end{align*}
where we used the Cauchy-Schwarz inequality, the fact that $|\Re(\phi_f(t))|\leq |\phi_f(t)|$
and Condition \ref{conditionf2}. As far as $S_4$ is concerned, we have
\begin{align*}
S_4 & \lesssim \ex\left[\left(\int_{\mathbb{R}}\left|\frac{\hat{\phi}(t)}{\phi_{X_1}(t)}-1\right||v_h(t)|dt\right)^2 1_{B_n^c}\right]\\
& + \ex\left[ \left(\int_{\mathbb{R}}\left\{\log\left(\left|\frac{\hat{\phi}(t)}{\phi_{X_1}(t)}\right|\right)-\left(\left|\frac{\hat{\phi}(t)}{\phi_{X_1}(t)}\right|-1\right)\right\}v_h(t)dt\right)^2 1_{B_n^c} \right]\\
& = S_5+S_6.
\end{align*}
An application of the Cauchy-Schwarz inequality and Conditions \ref{conditionsigma} and \ref{conditionf2}  give
\begin{equation}
\label{starstar}
S_5\lesssim
e^{4\Lambda+2\Sigma^2/h^2}\int_{\mathbb{R}}(v_h(t))^2dt\ex\left[\int_{-\sqrt{2}/h}^{\sqrt{2}/h}|\hat{\phi}(t)-\phi_{X_1}(t)|^2dt\right],
\end{equation}
where we also used the fact that on the set $B_n^c$ the inequality \eqref{logineqhalf} holds. Parseval's identity and Proposition 1.7 of \cite{tsyb} (notice that in the latter it is actually not necessary to have a positive kernel) applied to
the sinc kernel then yield
\begin{equation*}
\ex\left[\int_{-\sqrt{2}/h}^{\sqrt{2}/h}|\hat{\phi}(t)-\phi_{X_1}(t)|^2dt\right]\lesssim
\frac{1}{nh},
\end{equation*}
from which and from \eqref{starstar} we obtain
\begin{equation*}
S_5\lesssim e^{2\Sigma^2/h^2}h^4\frac{1}{n}.
\end{equation*}
Using the fact that on the set $B_n^c$ the inequality \eqref{logineqhalf} holds and combining it with an inequality $|\log(1+z)-z|\leq|z|^2$ valid for $|z|<1/2,$ one sees that $S_6\lesssim S_5.$ Furthermore, by a standard argument under Condition \ref{conditionh2} the term $S_3$ dominates other terms. For instance, we have
\begin{equation*}
e^{2\Sigma^2/h^2}h^4\frac{1}{n} h^{-s-5}e^{2\alpha/h^s}\rightarrow 0,
\end{equation*}
because
\begin{equation*}
\frac{2\Sigma^2}{h^2}+\frac{2\alpha}{h^s}-\log n+\ \log h^{-s-1}
=-(\log\log n)^2-(s+1)\log h\rightarrow -\infty.
\end{equation*}
This follows from \eqref{bandwidth2} and the fact that under Condition \ref{conditionh2} it holds that $h\asymp (\log n)^{-1/2}.$ The latter can be shown as formula (27) in \cite{but}. Hence $S_3$ dominates $S_5.$ Combination of all the above bounds on the terms $S_i$ completes the proof.
\end{proof}

\begin{proof}[Proof of Theorem \ref{thm-rho2}]
A general line of the proof is the same as in the proof of Theorem \ref{thm-rho}. With the same notation for the individual terms $T_i$ as in the latter, by the the same argument as for the term $T_1$ in the proof of Theorem \ref{thm-rho} and term $S_3$ in the proof of Theorem \ref{thmsigmatilde2} we have
\begin{align*}
T_1 & \lesssim \left( \int_{\mathbb{R}\backslash [-h^{-1},h^{-1}]} |\phi_f(t)|dt \right)^2 + h^{s-1}e^{-2\alpha/h^s} \\
& =  \left( \int_{\mathbb{R}\backslash [-h^{-1},h^{-1}]} e^{-\alpha |t|^s}e^{\alpha|t|^s}|\phi_f(t)|dt \right)^2 + h^{s-1}e^{-2\alpha/h^s}\\
& \lesssim \int_{\mathbb{R}\backslash [-h^{-1},h^{-1}]} e^{-2\alpha |t|^s} dt\int_{\mathbb{R}\backslash [-h^{-1},h^{-1}]} e^{2\alpha|t|^s}|\phi_f(t)|^2dt + h^{s-1}e^{-2\alpha/h^s}\\
& \lesssim \int_{\mathbb{R}\backslash [-h^{-1},h^{-1}]} e^{-2\alpha |t|^s} dt + h^{s-1}e^{-2\alpha/h^s}\\
& \lesssim \int_{1/h}^{\infty} e^{-2\alpha t^s}dt + h^{s-1}e^{-2\alpha/h^s}\\
& \lesssim h^{s-1}e^{-2\alpha/h^s}.
\end{align*}
Denote by $\operatorname{MSE}[\hat{\sigma}^2]$ the mean square error of $\hat{\sigma}^2.$ From the proof of Theorem \ref{thm-rho} and by Theorem \ref{thmsigmatilde2} we have
\begin{align*}
T_2 & \lesssim \frac{1}{h^2}\operatorname{MSE}[\hat{\sigma}^2] + T_4\\
& \lesssim h^{s-1}e^{-2\alpha/h^s} +  \frac{e^{2\Sigma^2/h^2}}{nh^8}.
\end{align*}
We then obtain
\begin{equation*}
T_1+T_2\lesssim h^{s-1}e^{-2\alpha/h^s}+\frac{e^{2\Sigma^2/h^2}}{nh^8}\lesssim h^{s-1}e^{-2\alpha/h^s},
\end{equation*}
because
\begin{equation*}
\frac{e^{2\Sigma^2/h^2}}{nh^8}h^{1-s}e^{2\alpha/h^s}\rightarrow 0,
\end{equation*}
which can be seen by taking the logarithm of the lefthand side and then using Condition \ref{conditionf2} and the fact that $h\asymp (\log n)^{-1/2},$ cf.\ formula (27) in \cite{but}, to conclude that the lefthand side in the above display diverges to minus infinity. This entails the first statement of the theorem.

Before proving the second statement of the theorem, we will show that the choice of $h$ as in Condition \ref{conditionh2} is optimal in a sense that it asymptotically minimises the order bound on the mean square error of $\hat{\rho}.$ This follows in essence by arguments similar to those used in the proof of Lemma 4 in \cite{but}: a minimiser $h_{\ast}$ with respect to $h$ of the expression
\begin{equation*}
h^{s-1}e^{-2\alpha/h^s}+\frac{e^{2\Sigma^2/h^2}}{nh^8},
\end{equation*}
which up to a constant is an upper bound on the risk of the estimator $\hat{\rho},$ can be found from the equation
\begin{equation*}
\frac{d}{dh}\left[h^{s-1}e^{-2\alpha/h^s}+\frac{e^{2\Sigma^2/h^2}}{nh^8}\right]=0.
\end{equation*}
After neglecting lower order terms  (here we assume that $h\rightarrow 0$ as $n\rightarrow\infty$), one can deduce that $h_{\ast}$ has to satisfy
\begin{equation}
\label{opband}
\frac{2\alpha s}{4\Sigma^2}nh_{\ast}^{9}(1+o(1))=e^{2\Sigma^2/h_{\ast}^2+2\alpha/h_{\ast}^s}.
\end{equation}
Taking logarithm of the both sides of \eqref{opband} yields that $h_{\ast}$ satisfies
\begin{equation}
\label{opband2}
a\log h_{\ast} + \frac{2\alpha}{h_{\ast}^s}+\frac{2\Sigma^2}{h_{\ast}^2}=\log n + C(1+o(1))
\end{equation}
for some constants $a$ and $C,$ cf.\ equation (11) in \cite{but}. With $h_{\ast}$ chosen as in \eqref{opband} or \eqref{opband2}, the term $h_{\ast}^{s-1}e^{-2\alpha/h_{\ast}^s}$ dominates the term $e^{2\Sigma^2/h^2}/(nh_{\ast}^8),$ cf.\ pp.\ 30--31 in \cite{but} for a similar result for the kernel-type deconvolution density estimator in a particular deconvolution problem. Indeed, for $h_{\ast}$ satisfying \eqref{opband} we have
\begin{equation*}
h_{\ast}^{s-1}e^{-2\alpha/h_{\ast}^s} \asymp \frac{e^{2\Sigma^2/h_{\ast}^2}}{nh_{\ast}^8}h_{\ast}^{s-2},
\end{equation*}
and it suffices to observe that $s<2$ by our assumptions. Let $\tilde{h}$ be as in \eqref{bandwidth2}. For any $b\in\mathbb{R}$ the formula
\begin{equation*}
h_{\ast}^be^{-2\alpha/h_{\ast}^s}=\tilde{h}^be^{-2\alpha/\tilde{h}^s}(1+o(1))
\end{equation*}
holds, which can be proved exactly as formula (28) of \cite{but}. Furthermore,
\begin{equation*}
\frac{e^{2\Sigma^2/\tilde{h}^2}}{n\tilde{h}^8}=o\left( e^{-2\alpha/\tilde{h}^s} \right),
\end{equation*}
which is a direct consequence of \eqref{bandwidth2} and the fact that $\tilde{h}\asymp (\log n)^{-1/2},$ cf.\ formula (27) of \cite{but}. Finally,
\begin{equation*}
\frac{e^{2\Sigma^2/\tilde{h}^2}}{n\tilde{h}^8}\leq \frac{e^{2\Sigma^2/h_{\ast}^2}}{nh_{\ast}^8},
\end{equation*}
for $n$ large enough, which can be shown as formula (30) of \cite{but}. These facts together imply that $h$ as in \eqref{bandwidth2} defines an optimal bandwidth, for an upper bound on the risk of $\hat{\rho}(x)$ computed with such an $h$ is of the same order as the one computed with $h_{\ast}.$ Combination of the above results proves the first statement of the theorem.

To complete the proof of the theorem, it remains to prove \eqref{s1}. Assuming $n$ is large enough, by \eqref{bandwidth2} it holds in the case $s=1$ that
\begin{equation*}
\frac{1}{h}=\frac{-\alpha+\sqrt{2\Sigma^2(\log n-(\log \log n)^2)+\alpha^2}}{2\Sigma^2}.
\end{equation*}
From this is follows that
\begin{equation*}
\exp\left(-\frac{2\alpha}{h}\right)\lesssim \exp\left( -{2\alpha}\sqrt{\frac{1}{2\Sigma^2}(\log n-(\log\log n)^2)} \right).
\end{equation*}
The righthand side is of order $\exp(-2\alpha\sqrt{(\log n)/(2\Sigma^2)}),$ as can be seen by some straightforward manipulations: we have
\begin{equation}
\label{eq2}
\begin{split}
\exp\left( -{2\alpha}\sqrt{\frac{1}{2\Sigma^2}(\log n-(\log\log n)^2)} \right)
= \exp\left( -{2\alpha}\sqrt{\frac{1}{2\Sigma^2}\log n} \right)\\
\times \exp \left( {2\alpha}\sqrt{\frac{1}{2\Sigma^2}\log n} -{2\alpha}\sqrt{\frac{1}{2\Sigma^2}(\log n-(\log\log n)^2)}  \right)
\end{split}
\end{equation}
and
\begin{equation*}
\sqrt{\frac{1}{2\Sigma^2}\log n} -\sqrt{\frac{1}{2\Sigma^2}(\log n-(\log\log n)^2)}\rightarrow 0,
\end{equation*}
because the lefthand side of the latter can be rewritten as
\begin{equation*}
\sqrt{\frac{1}{2\Sigma^2}}\frac{(\log\log n)^2}{\sqrt{\log n}} \left[  \left( 1 - \sqrt{ 1- \frac{(\log\log n)^2}{\log n}  } \right)\frac{\log n}{(\log\log n)^2}  \right].
\end{equation*}
The term in the square brackets converges to $-1/2,$ because it converges to a derivative of the function $\sqrt{1-t}$ at $t=0,$ while for the first factor we have
\begin{equation*}
\sqrt{\frac{1}{\sqrt{2\Sigma^2}}}\frac{(\log\log n)^2}{\sqrt{\log n}}\rightarrow 0.
\end{equation*}
Consequently, the righthand side of \eqref{eq2} is of order $\exp(-2\alpha\sqrt{(\log n)/(2\Sigma^2)}).$ This concludes the proof of the theorem.
\end{proof}

%\bigskip

%\noindent{\bf Acknowledgments}

\section*{Acknowledgments}
\noindent{Part of the research reported in this paper was done while the author was at EURANDOM, Eindhoven, The Netherlands.}

\end{document}